\documentclass[10pt,twocolumn]{IEEEtran} 

\usepackage{graphicx,amsmath,amsfonts,amsbsy,bm,amsthm,epstopdf} %amssymb,
\newtheorem{theorem}{Theorem}
\newtheorem{lemma}{Lemma}
\newtheorem{definition}{Definition}

\def\p{{ p }}
\def\e{ {\rm e}}
\def\bfnu{ {\boldsymbol \nu} }
\def\bfA{ {\boldsymbol A} }
\def\bfB{ {\boldsymbol B} }
\def\bfC{ {\boldsymbol C} }
\def\bfD{ {\boldsymbol D} }
\def\bfG{ {\boldsymbol G} }
\def\bfI{ {\boldsymbol I} }
\def\bfJ{ {\boldsymbol J} }

\def\bfs{ {\boldsymbol s} }
\def\bfM{ {\boldsymbol M} }
\def\bfO{ {\boldsymbol O} }
\def\bfP{ {\boldsymbol P} }
\def\bfQ{ {\boldsymbol Q} }
\def\bfZ{ {\boldsymbol Z} }
\def\bfU{ {\boldsymbol U} }
\def\bfV{ {\boldsymbol V} }
\def\bfu{ {\boldsymbol u} }
\def\bfv{ {\boldsymbol v} }

\def\bfUpsilon{ {\boldsymbol \Upsilon} }
\def\bfPsi{ {\boldsymbol \Psi} }
\def\bfOmega{ {\boldsymbol \Omega} }
\def\bfLambda{ {\boldsymbol \Lambda} }
\def\bfS{ {\boldsymbol S} }
\def\bfT{ {\boldsymbol T} }
\def\bfX{ {\boldsymbol X} }
\def\bfY{ {\boldsymbol Y} }
\def\deltt{\Delta_{\rm t}}
\def\EqualDist{\mathrel{\mathop=^{\rm d}}}
\def\EqualDef{\mathrel{\mathop=^{\rm def}}}
\def\bfSigma{  {\boldsymbol\Sigma}      }
\def\bfPhi{  {\boldsymbol\Phi}      }
\def\bfTheta{  {\boldsymbol\Theta}      }
\def\hSRB{ {\hat{\bfS}}_{\rm RB} }
\def\hSLW{ {\hat{\bfS}}_{\rm LW} }
\def\loading{ {\mu_0}}
\def\hatloading{ {\hat{\mu}}_0 }

\def\hrhoLW{ {{\hat{\rho}}_{\rm LW}} }
\def\hrhoRB{ {{\hat{\rho}}_{\rm RB}} }
\def\hbS{ {\hat{\bfS}} }

\newtheorem{remark}{Remark}

\def\Re{ {\rm Re}}

\def\eye{ {\rm i}}

\def\dif{ {\rm d} }

\def\bU{ {\boldsymbol U} }

\def\fN{f^{}_{\cal N}}

\def\EqualDist{\mathrel{\mathop=^{\rm d}}}

\def\var{\mathop{\rm var}\nolimits}

\def\cov{\mathop{\rm cov}\nolimits}

\def\tr{ \mbox{tr} }

\def\fN{f^{}_{\cal N}}

\hyphenation{op-tical net-works semi-conduc-tor}

%%%%%%%%%%%%%%%%%%%%%%%%%%%%%%%%%%%%%%%%%%%%%%%%%%%%%%%%%%%%%%%%%%%%%%%%%%%%%%%%%%%%
%  MATLAB Files used %%%%%%%%%%%%%%%%%%%%%%%%%%%%%%%%%%%%%%%%%%%%%%%%%%%%%%%%%%%%%%%
%
% Fig 1 (simulated PRIAL <- /ConditionNumberConstraint/plot_PRIALS.m (data from Rao_Blackwell_Shrinkage.m)
% Fig 2 shrinkparams4.eps <- /ConditionNumberConstraint/plot_RaoBlackwell_Shrinkage_parameters.m
%
% Fig 3  <- /schizMATLABRussia/test_ar1_peq5.m [note results of this long-execution program are in test_ar1_peq5.mat]
%
%
% Fig 4 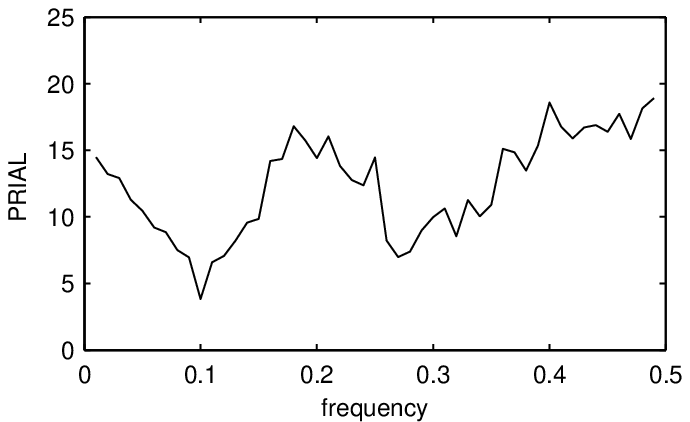 <- /schizMATLABRussia/test_ar1_peq10_Marzetta.m [note results of this long-execution program are in test_ar1_peq10_Marzetta_quadloss.mat]
%
% Fig 5  <- /schizMATLABRussia/plot_PRIALS_invcov_vs_CRBLW.m [this uses results derived by Deborah and put into MAT files]
%
% Quoted timings are from Marzetta_test_timings.m
%
% not used: randmat_choose_L500.eps <- BOOK/Chapter5/randommatrix_choose_L.m

%%%%%%%%%%%%%%%%%%%%%%%%%%%%%%%%%%%%%%%%%%%%%%%%%%%%%%%%%%%%%%%%%%%%%%%%%%%%%%%%%%%%

\begin{document}

%\markboth{IEEE Transactions on Signal Processing, [\today]}%{}

\title{Random Matrix Derived Shrinkage of Spectral Precision Matrices}

\author{A.~T.~Walden, {\it Senior~Member, IEEE},  and D.~Schneider-Luftman \thanks{
Copyright (c) 2014 IEEE. Personal use of this material is permitted. However, permission to use this material for
any other purposes must be obtained from the IEEE by sending a request to pubs-permissions@ieee.org.
%The work of Ed Cohen was supported by EPSRC (UK).
Andrew Walden and Deborah Schneider-Luftman
are both at the Department of Mathematics, Imperial College  London, 180 Queen's Gate,
London SW7 2BZ, UK.  
%Tel: (0)20 7594 8524 (Walden);
%Fax:   (0)20 7594 8517\newline
(e-mail: a.walden@imperial.ac.uk and deborah.schneider-luftman11@imperial.ac.uk)
%\newline
%EDICS: SSP-SSAN; SSP-NSSP 
}}

\maketitle

\begin{abstract} 
Much research has been carried out on shrinkage methods for real-valued covariance matrices. In spectral analysis of $p$-vector-valued time series there is often a need for good shrinkage methods too, most notably when the  complex-valued spectral matrix is singular. The equivalent of the Ledoit-Wolf (LW) covariance matrix estimator for spectral matrices can be improved on using a Rao-Blackwell estimator, and using random matrix theory we derive its form. 
Such estimators can be used to better estimate inverse spectral (precision) matrices too, and a  random matrix method  has previously been proposed and implemented via extensive simulations. We describe the method, but carry out computations entirely analytically,   and suggest a way of selecting an important parameter using a predictive risk approach. We show that both the Rao-Blackwell estimator  and the random matrix estimator of the precision matrix can substantially outperform the inverse of the LW estimator in a time series setting. 
Our new methodology is applied to EEG-derived time series data  where it is seen to work well and deliver substantial improvements for precision matrix estimation.

\end{abstract}

\begin{IEEEkeywords} Rao-Blackwell estimators, random matrix theory, shrinkage, spectral matrix.
\end{IEEEkeywords}

%\maketitle
%
%
%
%
%
%\begin{abstract}
%
%%\boldmath
%
%The abstract goes here.
%
%\end{abstract}
%
%\begin{IEEEkeywords}
%
%IEEEtran, journal, \LaTeX, paper, template.
%
%\end{IEEEkeywords}

%\IEEEpeerreviewmaketitle

%%%%%%%%%%%%%%%%%%%%%%%%%%%%%%%%%%%%%%%%%%%%%%%%%%%%%%%%%%%%%%%%%%%%%%%%%%%%%%%%%%%%%%%%%%%%%%%%%%%%%%%%%

\section{Introduction}
\label{sec:introduction}

A stationary
$p$-vector-valued time series has, at each frequency $f,$ a $p\times p$ complex-valued  spectral matrix  $\bfS(f),$ for which an  
estimator $\hbS(f),$ can be derived.  If such an estimator is computed by a multitaper scheme involving $K$ tapers (e.g., \cite{PercivalWalden93}) then the spectral matrices --- complex-valued analogues of covariance matrices --- will be singular if $p>K$ 
(and ill-conditioned if $K$ is only a little larger than $p$). 
Unfortunately $K$ cannot be simply increased because of its connection to the implied smoothing bandwidth:  if $K$ is made larger, the required resolution may be lost. (Other estimators such as periodograms smoothed over frequencies have analogous properties.) In this paper we look at the estimation of $\bfS(f)$ and more particularly the spectral `precision' matrix defined as 
$\bfC(f)=\bfS^{-1}(f)$ when $\hbS(f)$ is singular. The precision matrix is used in the computation of partial coherencies in time series graphical modelling (see e.g. \cite{Medkouretal10}
and references therein for a neuroscience application). We don't assume a very large $p$ since the moderate $p$ scenario is often encountered in practice and practically is just as  important. We shall first give a review of relevant covariance matrix estimation literature, before turning to the contributions of this paper.

The estimation of a  covariance matrix $\bfSigma$ from $N$ samples  of $p$ real-valued zero mean random variables has been extensively researched for the case $N>p.$ 
%with $N$ of the same order as $p$ with $p$ not particularly large. 
Although the resulting non-singular sample covariance estimator ${\hat{\bfSigma}}$ of $\bfSigma$ is unbiased
its eigenvalues tend to be more
spread out than the true eigenvalues. To ameliorate this problem
\cite{JamesStein61} looked at minimax estimation over a certain group, but the estimators depend on the coordinate system.
This problem was removed by
\cite{DeySrinivasan85} who considered orthogonally equivariant minimax estimators:
an  estimator ${\cal F}({\hat\bfSigma})$ of $\bfSigma$ is said to be orthogonally equivariant
 if for any orthogonal matrix $\bfO,$ we have
$
{\cal F}(\bfO{\hat\bfSigma}\bfO^T)=\bfO{\cal F}({\hat\bfSigma})\bfO^T,  
$
where $^T$ denotes transposition.
In fact such estimators shrink the sample eigenvalues, and so are of the widely researched shrinkage class, see e.g., \cite{EfronMorris76,Haff80,Stein75}. 

For shrinkage estimators which are a combination of the standard covariance matrix and a target matrix proportional to the identity, Ledoit and Wolf (LW) \cite{LedoitWolf03,LedoitWolf04} derived the ideal shrinkage parameter, or `oracle' value, that minimizes a risk measure between ${\hat{\bfSigma}}$ and $\bfSigma.$ Such LW estimators are (i) suitable for the case $N<p$ when ${\hat{\bfSigma}}$ is singular, (ii) do not assume Gaussianity, and (iii) may be used in large $p$ settings. Modifications to the target matrix were discussed in \cite{SchafferStrimmer05} and \cite{ChenWangMcKeown12}, the latter shrinking the sample covariance matrix towards its tapered version for high-dimensional matrices; modified estimators for this case  were also suggested in \cite{FisherSun11}.

Under the Gaussianity assumption, \cite{Chen_etal10} showed that the LW estimator can be significantly improved upon. They developed the so-called Rao-Blackwell (RB) estimator which is guaranteed at least as good as the LW estimator under any convex loss criterion.

There has also been much interest in accurate estimation of the precision matrix $\bfSigma^{-1}.$  A weighted combination of ${\hat{\bfSigma}}^{-1}$ and the identity was considered by \cite{EfronMorris76}, and improved on by \cite{Haff79}. By looking over the class of orthogonally equivariant estimators for real covariance matrices, Ledoit and Wolf \cite{LedoitWolf12} produced nonlinear shrinkage estimators for $\bfSigma$ and $\bfSigma^{-1}.$
All these studies assumed that $N>p.$ 
Also the  calculations involved in \cite{LedoitWolf12} are hugely costly. 
The singular case has been attracting much attention recently in the context of estimating sparse precision matrices $\bfSigma^{-1}$ in high-dimensional situations ($p>>N$), see e.g., \cite{Cai_etal11,Chen_etal12,LamFan09,MeinhausenBuhlman06,Rothman_etal08}.

%{Zhang_etal13}{KubokawaSrivastava08} 

Following some background material on spectral matrix estimation in Section~\ref{sec:backg},
the  contributions of this paper are as follows.
\begin{enumerate}
\item
In Section~\ref{sec:conventional} we study LW oracle estimation for $\bfS(f),$ and give the form of the practical estimator $\hSLW(f).$ The related Rao-Blackwell estimator for the spectral matrix, $\hSRB(f)$, is found 
in Section~\ref{sec:RaoB}.  These oracle and Rao-Blackwell estimators are surprisingly different in form to the real-valued cases. The Rao-Blackwell estimator is derived making substantial use of random matrix theory and is very simple in form and thus highly usable in practice. The Gaussian assumption is used to derive simple forms for the oracle shrinkage parameter and for the Rao-Blackwell estimator. While in standard real-valued covariance matrix estimation Gaussianity is a problematic assumption and robustness issues arise, in our context this is not dubious because of the Central Limit Theorem effect of the vector Fourier transform used in the time series setting.
\item
Section~\ref{sec:invRaoB} points out that
the inverse of the Rao-Blackwell estimator is in the form of a ``Rao-Blackwellized'' estimator for $\bfC(f).$
We show that this estimator can substantially outperform the inverse of the LW estimator  in a time series setting. 
\item
In Section~\ref{sec:Marzetta} we examine direct estimation of $\bfC(f)$ from singular estimators
${\hat\bfS}(f)$  using random matrix methods as developed in \cite{Marzetta_etal11}, and 
formulate a completely analytic (rather than simulation-based) approach to obtain the estimators.
A predictive risk approach is given to select a controlling parameter. 
We show that this estimator can substantially outperform the inverse of the LW estimator  in a time series setting. 
\item
Our new methodology is applied to electroencephalogram (EEG) derived time series data in Section~\ref{eq:EEGapp}, where it is seen to work well and deliver substantial improvements over the 
inverse LW estimators of $\bfC(f).$

\end{enumerate}

\section{Spectral Matrix Estimation}  \label{sec:backg}
Here we consider a  real  $p$-vector-valued
discrete time stochastic process\/ $\{ {\bfX}_t \}$ 
whose $t$th element is the column vector 
${\bfX}_t=[X_{1,t},\ldots,X_{p,t}]^T,$ 
and  each component process has zero mean. The sample interval is denoted by $\deltt.$
We assume the $\p$ processes are jointly stationary, i.e., for all $l,m=1,\ldots,p,$
$s_{lm,\tau}=\cov\,\{ X_{l,t+\tau}, X_{m,t} \}$
is a function of $\tau$ only.

The matrix autocovariance sequence $\{ \bfs_\tau\}$ is defined by
${\bfs}_{\tau}=\cov\{  {\bfX}_{t+\tau}, {\bfX}_t^T\}=
E\{ {\bfX}_{t+\tau} {\bfX}_t^T \},
$
and each component is assumed absolutely summable.
The spectral matrix,  is then
$
\bfS(f)=\deltt \sum_{\tau=-\infty}^\infty {\bfs}_{\tau}
\,\e^{-{\eye}2\pi f \tau \,\deltt}. 
$ 

We make use of a set of $K$ orthonormal tapers $\{h_{k,t}\}, k=0,\ldots,K-1$
and for $t=0,\ldots,N-1,$
form the product
$h_{k,t} {\bfX}_t$ of the $t$th component of the $k$th taper with the $t$th component of the $p$-vector-valued
process, and for $k=0,\ldots,K-1$ compute the  vector Fourier transform
$$
{\bfJ}_{k}(f)
\EqualDef
\deltt^{1/2}\sum_{t=0}^{N-1} h_{k,t} {\bfX}_t \,\e^{-\eye 2\pi ft\,\deltt}.
%\eqno\EqFigDef\eqnSEgg
$$

Let $\bfJ(f)$ be the $p\times K$ matrix defined by
\begin{equation}\label{eq:defJ}
\bfJ(f)=[\bfJ_0(f),\ldots,\bfJ_{K-1}(f)].
\end{equation}
Then the multitaper estimator of the $p\times p$ spectral matrix ${\bfS}(f)$ 
is
\begin{eqnarray}
{\hat {\bfS}} (f) &=&
{1 \over K} \sum_{k=0}^{K-1}  {\hbS}_k(f)=
{1 \over K} \sum_{k=0}^{K-1}  {\bfJ}_k(f) {
\bfJ}^H_k(f)= \nonumber \\
&=&
 {1 \over K} \bfJ(f)\bfJ^H(f), 
\label{eq:eqnSHRdd}
\end{eqnarray}
where ${\hbS}_k(f){\displaystyle{\EqualDef}}{\bfJ}_k(f) {\bfJ}^H_k(f).$

\begin{remark}
This conveniently mimicks the classical covariance matrix estimator: 
if ${\bfY}_0,\ldots,{\bfY}_{K-1}$ are $K$ independent $p$-dimensional Gaussian real-valued random vectors  with zero means and covariance matrix $\bfSigma,$
 then the maximum likelihood estimator for $\bfSigma$ is
$
{\hat{\bfSigma}}={1\over K} \sum_{k=0}^{K-1} \bfY_k\bfY_k^T.
%\eqno\EqFigDef\eqnSHRbb
$
\end{remark}
\medskip

Letting $B$ denote the bandwidth of the spectral window corresponding to the tapering, then
${\bfJ}_{k}(f), k=0,\ldots,K-1,$  
may be taken to be  independently and 
identically distributed 
as $p$-vector-valued complex Gaussian with mean zero and covariance matrix $\bfS(f):$
\begin{equation}
{\bfJ}_{k}(f)
 \EqualDist
{\cal N}_p^C\{{\bf 0},{\bfS}(f)\},
\label{eq:eqnSHRee}
\end{equation}
for $B/2< |f|< \fN-B/2$ for finite $N$ and Gaussian processes, or $0<|f|<\fN$ asymptotically \cite{ChandnaWalden11}. 
Then the estimator of (\ref{eq:eqnSHRdd}) is the maximum-likelihood estimator for $\bfS(f),$ \cite{Goodman63}. 
Further,
\begin{equation}
E\{ {\hat {\bfS}} (f)\}\!=\!
{1 \over K} \sum_{k=0}^{K-1} E\{ {\bfJ}_k(f) {
\bfJ}^H_k(f)\}\!=\!{1 \over K} \sum_{k=0}^{K-1}  {\bfS}(f)=\bfS(f), 
\label{eq:eqnSHRll}
\end{equation}
and $E\{ \tr\{ {\hat{\bfS}}\} \}=E\{ \sum_{j=1}^p {\hat S}_{jj}\}= \sum_{j=1}^p S_{jj}=\tr\{\bfS\},$  results we shall make use of later.
These hold whether $K\geq p,$ which corresponds to ${\hat {\bfS}} (f)$ being non-singular,  or $K<p,$ when the estimated matrix is singular (both with probability one).

\section{Conventional Shrinkage Methodology}\label{sec:conventional}
The conventional
approach to `covariance matrix' regularization which has been extensively studied involves the forming of a convex combination of the sample covariance matrix and some well-conditioned `target' matrix. For an estimated $p\times p$ Hermitian spectral matrix ${\hat{\bfS}}(f)$ this would take the form
\begin{equation}
{\bfS}^{\star}(f)=(1-\rho(f)) {\hat{\bfS}}(f)+\rho(f){\hat{\bfT}}(f),
\label{eq:eqnSHRff}
\end{equation}
where $\rho(f)\in(0,1)$ is known as the shrinkage parameter and ${\hat{\bfT}}(f)$ is the target matrix. Provided 
${\hat {\bfS}}(f)$ and ${\hat{\bfT}}(f)$ are both positive definite, then this convex combination   will itself be positive definite. For notational brevity we shall drop the explicit frequency dependence in most of what follows.

Apart from being positive definite, suppose that no {\it a priori}\/ form is imposed on ${\hat{\bfT}}$ and our goal is to find an optimal estimator for $\bfS$ of the form of (\ref{eq:eqnSHRff}) by determining $\rho=\rho_0$ such that
$$
\rho_0=\arg\min E\{ || {\bfS}^{\star} -\bfS ||^2_{\rm F}\},
$$ 
where, for $\bfA\in{\mathbb C}^{p\times p}$, $|| \bfA ||_{\rm F}$ denotes the Frobenius norm
$
||\bfA||_{\rm F}=[\tr\{\bfA \bfA^H\}]^{1/2}
,$ $\tr\{\cdot\}$ denotes trace,
and $^H$ denotes complex-conjugate (Hermitian) transpose.

\subsection{Oracle Estimator}
Firstly we define
\begin{eqnarray*}
\alpha^2 &=& E\{ || \bfS -{\hat{\bfT}}\, ||^2_{\rm F}\}=E\{\tr\{[\bfS -{\hat{\bfT}}\,][{\bfS -{\hat{\bfT}}}\,]^H\} \}\\
\beta^2 &=& E\{ || {\hat{\bfS}} -\bfS ||^2_{\rm F}\}=E\{\tr\{[ {\hat{\bfS}} -\bfS][ {\hat{\bfS}} -\bfS]^H\}\} \\
\delta^2 &=& E\{ || {\hat{\bfS}} -{\hat{\bfT}}\, ||^2_{\rm F}\}=E\{\tr\{[ {\hat{\bfS}} -{\hat{\bfT}}\, ][ {\hat{\bfS}} -{\hat{\bfT}}\, ]^H\}\} \\
\gamma^2 &=& E\{\tr\{[ {\hat{\bfS}} -\bfS ][ \bfS-{\hat{\bfT}}\,  ]^H\}\}.
\end{eqnarray*}

Then with $\Re\{\cdot\}$ denoting ``real part of,''
\begin{eqnarray*}
\delta^2 &=& E\{ || {\hat{\bfS}} -{\hat{\bfT}}\, ||^2_{\rm F}\}=
E\{ ||\,\, [{\hat{\bfS}} -\bfS] + [\bfS -{\hat{\bfT}}\,]\,\,||^2_{\rm F}\}\\
&=& E\{ ||\,\,\bfS -{\hat{\bfT}}\,\,||^2_{\rm F}\}+E\{ ||\,\, {\hat{\bfS}} -\bfS \,\,||^2_{\rm F}\}\\
&+&
2\Re\{ E\{\tr\{[{\hat{\bfS}} -\bfS]
[\bfS -{\hat{\bfT}}\,]^H\}\}\}\\
&=&\alpha^2+\beta^2+2\gamma^2,
\end{eqnarray*}
since $[{\hat{\bfS}} -\bfS]$ and $[\bfS -{\hat{\bfT}}\,]^H$ are both Hermitian, (each of $\hbS, \bfS$ and ${\hat{\bfT}}$ is Hermitian), and therefore the trace of the product is guaranteed real-valued,  so $\Re\{\cdot\}$ is not needed.

The objective function can be written
\begin{eqnarray*}E\{ || {\bfS}^{\star}-\bfS ||^2_{\rm F}\}&=&
E\{ || (1-\rho) {\hat{\bfS}}+\rho{\hat{\bfT}}-\bfS||^2_{\rm F}\}\\
&=& E\{ || \,\,\rho[ {\hat{\bfT}}-\bfS]
+(1-\rho)[ {\hat{\bfS}}-\bfS]\,\,||^2_{\rm F}\}\\
&=& \rho^2 \alpha^2+ (1-\rho)^2\beta^2-2\rho(1-\rho)\gamma^2.
\end{eqnarray*}

Differentiating with respect to $\rho$ and setting to zero:
$$
{\partial\over \partial\rho} E\{ || {\bfS}^{\star} -\bfS ||^2_{\rm F}\}=2\rho\alpha^2-2(1-\rho)\beta^2-2(1-2\rho)\gamma^2=0
$$
so that the solution is \cite{FiecasOmbao11,FisherSun11}
\begin{equation}
\rho_0={{\beta^2+\gamma^2}\over{\delta^2}}={{\beta^2-\alpha^2+\delta^2  }\over{2\delta^2  }}.
\label{eq:eqnSHRjj}
\end{equation}
The second derivative is positive so that the objective function is minimized with this $\rho_0$ value. 

The term ${\beta^2+\gamma^2}$ can be rewritten as
\begin{eqnarray*}
\!\!\!&&\!\!\!\!\!\!\!\!\!
E\{\tr\{[ {\hat{\bfS}} -\bfS][ {\hat{\bfS}} -\bfS]^H\}\} +E\{\tr\{[ {\hat{\bfS}} -\bfS ][ \bfS-{\hat{\bfT}}\,  ]^H\}\}\\
&=&E\{\tr\{[ {\hat{\bfS}} -\bfS][ {\hat{\bfS}} -{\hat{\bfT}}]\}\},
\end{eqnarray*}
where we have used the Hermitian properties of ${\hat {\bfS}}$ and ${\hat{\bfT}}.$ So $\rho_0$ in (\ref{eq:eqnSHRjj}) becomes
\begin{equation}
\rho_0=
{
{
E\big\{\tr\{[ {\hat{\bfS}} -\bfS][ {\hat{\bfS}} -{\hat{\bfT}}]\}\big\}
}
\over
{
E\big\{\tr\{[ {\hat{\bfS}} -{\hat{\bfT}}\, ]^2\}\big\}
}
}.\label{eq:eqnSHRaf}
\end{equation}
which is of the same form as found in \cite[eqn.~(6)]{Chen_etal10} for the real-valued case.
This form for $\rho_0$ is distribution invariant.
In order to rewrite $\rho_0$ in (\ref{eq:eqnSHRaf}) in a useful form involving just $\bfS$ and parameters $K$ and $p,$ Gaussianity will be assumed, which is justified as discussed earlier.

\subsection{Stochastic Target}\label{subsec:stochastic}

Suppose we define $\loading=\tr\{\bfS\}/p$ and $
{\hatloading}=\tr\{{\hat{\bfS}}\}/p
%\eqno\EqFigDef\eqnSHRoo
$
 and take
$
{\hat{\bfT}}=(\tr\{ \hbS \}/p)\bfI_p= {\hatloading} \bfI_p.
$
In this case both ${\hat{\bfT}}$ and $\hbS$ will be subject to estimation error and will in general be correlated. (This was the case developed in \cite{LedoitWolf03} for  real-valued covariance matrices.)
\begin{theorem}\label{theorem:one}
Let ${\hat{\bfT}}=(\tr\{ \hbS \}/p)\bfI_p.$
Under the assumption (\ref{eq:eqnSHRee}), $\rho_0$ in (\ref{eq:eqnSHRaf})
can be written
\begin{equation}
\rho_0={
{
\tr^2\{\bfS\}-{1\over p} \tr\{ \bfS^2 \}
}
\over
{
[1-{K\over p}]\tr^2\{\bfS\}+
[K-{1\over p}]\tr\{\bfS^2\}
}
}.\label{eq:eqnSHRae}
\end{equation}
\end{theorem}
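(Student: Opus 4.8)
\emph{Proof strategy.} The plan is to reduce $\rho_0$ in (\ref{eq:eqnSHRaf}) to an expression involving only the first two moments of $\hbS$ about the origin, and then to evaluate those moments from the complex Gaussian sampling model (\ref{eq:eqnSHRee}).

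First I would substitute $\hat{\bfT}=\hatloading\bfI_p$ with $\hatloading=\tr\{\hbS\}/p$ into the numerator and denominator of (\ref{eq:eqnSHRaf}), expand, and use linearity of trace and expectation. Using $E\{\hbS\}=\bfS$ from (\ref{eq:eqnSHRll}) (hence $E\{\tr\{\hbS\}\}=\tr\{\bfS\}$ and $E\{\tr\{\bfS\hbS\}\}=\tr\{\bfS^2\}$), the denominator $E\{\tr\{[\hbS-\hat{\bfT}]^2\}\}$ collapses to $E\{\tr\{\hbS^2\}\}-\tfrac1p E\{\tr^2\{\hbS\}\}$, and the numerator $E\{\tr\{[\hbS-\bfS][\hbS-\hat{\bfT}]\}\}$ collapses to $E\{\tr\{\hbS^2\}\}-\tfrac1p E\{\tr^2\{\hbS\}\}-\tr\{\bfS^2\}+\tfrac1p\tr^2\{\bfS\}$. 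So everything reduces to evaluating the two scalars $E\{\tr\{\hbS^2\}\}$ and $E\{\tr^2\{\hbS\}\}$.

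Next I would compute those two scalars using the fact that under (\ref{eq:eqnSHRee}) the columns $\bfJ_0,\ldots,\bfJ_{K-1}$ of $\bfJ$ are i.i.d.\ ${\cal N}_p^C\{{\bf 0},\bfS\}$, so $K\hbS=\bfJ\bfJ^H$ is a complex Wishart matrix on $K$ degrees of freedom with parameter $\bfS$. Writing $\hbS=\tfrac1K\sum_k\bfJ_k\bfJ_k^H$ in components and applying the circular complex Wick (Isserlis) rule --- $E\{J_{k,a}\overline{J_{k,b}}\,J_{k,c}\overline{J_{k,d}}\}=S_{ab}S_{cd}+S_{ad}S_{cb}$, and, crucially, $E\{J_{k,a}J_{k,c}\}=0$ --- while separating the $k=l$ from the $k\ne l$ contributions in the resulting double sum, gives $E\{\tr^2\{\hbS\}\}=\tr^2\{\bfS\}+\tfrac1K\tr\{\bfS^2\}$ and $E\{\tr\{\hbS^2\}\}=\tr\{\bfS^2\}+\tfrac1K\tr^2\{\bfS\}$. (These are just the standard second moments of the complex Wishart, cf.\ \cite{Goodman63}.)

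Finally I would substitute these expressions back, multiply numerator and denominator by $K$, and collect terms. The numerator then simplifies to $\tr^2\{\bfS\}-\tfrac1p\tr\{\bfS^2\}$ and the denominator to $[1-\tfrac Kp]\tr^2\{\bfS\}+[K-\tfrac1p]\tr\{\bfS^2\}$, which is (\ref{eq:eqnSHRae}). The one genuinely delicate step is the Wick computation in the previous paragraph: one must keep careful track of which pairings survive --- in particular that the pseudo-covariances $E\{J_{k,a}J_{k,c}\}$ vanish for the circularly symmetric complex Gaussian --- since this is precisely what makes the complex Wishart moments, and hence (\ref{eq:eqnSHRae}), differ from the real-valued analogue in \cite{Chen_etal10}.
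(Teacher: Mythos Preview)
Your proposal is correct and follows essentially the same route as the paper: expand numerator and denominator of (\ref{eq:eqnSHRaf}) with ${\hat\bfT}=(\tr\{\hbS\}/p)\bfI_p$, reduce everything to $E\{\tr\{\hbS^2\}\}$ and $E\{\tr^2\{\hbS\}\}$, evaluate these two complex-Wishart moments, and simplify. The only cosmetic difference is that the paper quotes the two moment identities from \cite{MaiwaldKraus00} rather than deriving them via the complex Wick rule as you outline.
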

\begin{proof}
From (\ref{eq:eqnSHRaf})
$$
\rho_0=
{
{
E\big\{\tr\{[ {\hat{\bfS}} -\bfS][ {\hat{\bfS}} -(\tr\{ \hbS \}/p)\bfI_p]\}\big\}
}
\over
{
E\big\{\tr\{[ {\hat{\bfS}} -(\tr\{ \hbS \}/p)\bfI_p\, ]^2\}\big\}
}
}.
$$ 
The numerator and denominator are then
\begin{eqnarray*}
&&E\big\{ \tr\{ \hbS^2\}-{1\over p}\tr^2\{ \hbS\} -\tr\{\bfS\hbS\}+{1\over p} \tr\{\bfS\}\tr\{\hbS\} \big\}\\
\mbox{and}&&E\big\{\tr\{\hbS^2\}-{1\over p}\tr^2\{ \hbS\} \big\},
\end{eqnarray*}
respectively. Under the assumption (\ref{eq:eqnSHRee}),  $
K\hbS$ has the complex Wishart distribution with mean $K\bfS.$ Then we know (e.g., \cite{MaiwaldKraus00})
\begin{eqnarray*}
E\big\{ \tr\{ \hbS^2\} \big\}&=& \tr\{ \bfS^2\}+ {1\over K} \tr^2\{\bfS\}\\
E\big\{ \tr^2\{ \hbS\} \big\}&=& \tr^2\{ \bfS\}+ {1\over K} \tr\{\bfS^2\}.
\end{eqnarray*}
So the numerator and denominator become
\begin{eqnarray*}
&&{1\over K} [ \tr^2\{\bfS\}-{1\over p}\tr\{\bfS^2\}]\\
\mbox{and}&&\left[1-{1\over {pK}}\right]\tr\{\bfS^2\}+\left[{1\over K}-{1\over p}\right]\tr^2\{\bfS\},
\end{eqnarray*}
respectively, and their ratio gives the required result.
\end{proof}
The form (\ref{eq:eqnSHRae}) is known as an `oracle' estimator 
since it involves the unknown quantities $\tr\{\bfS\}$ and $\tr\{\bfS^2\}$
and so its value is not known in practical situations.
\begin{remark}
The form of the estimator (\ref{eq:eqnSHRae}) for complex-valued covariance matrix estimators is  surprisingly different to that for real-valued covariance matrix estimators: compare (\ref{eq:eqnSHRae}) with 
\cite[eqn.~(7)]{Chen_etal10}.

\end{remark}

\subsection{Deterministic Target}\label{subsec:deterministic}
If ${\hat{\bfT}}$ is  constant, ${\hat{\bfT}}=\bfT$ say, then the term $\gamma^2=
\tr\{E\{[ {\hat{\bfS}} -\bfS ]\}[ \bfS-{{\bfT}}\,  ]\}=0,$ and $\rho_0$ in
(\ref{eq:eqnSHRjj}) becomes
$\rho_0=\beta^2/\delta^2.$ 
We now consider the target matrix  $\bfT=(\tr\{ \bfS \}/p)\bfI_p=\loading\bfI_p.$  
\begin{theorem}
Let
$\bfT=(\tr\{ \bfS \}/p)\bfI_p.$
Under the assumption (\ref{eq:eqnSHRee}), 
$
\rho_0={{\beta^2}/{\delta^2}}
$
can be written
\begin{equation}
\rho_0={
{
\tr^2\{\bfS\}
}
\over
{
[1-{K\over p}]\tr^2\{\bfS\}+
K\tr\{\bfS^2\}
}
}.\label{eq:eqnSHRext}
\end{equation}
\end{theorem}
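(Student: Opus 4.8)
The plan is to use the simplification, already noted just above the statement, that for a deterministic target $\bfT$ the cross term $\gamma^2$ vanishes, so that by (\ref{eq:eqnSHRjj}) we have $\rho_0=\beta^2/\delta^2$, and the identity $\delta^2=\alpha^2+\beta^2+2\gamma^2$ established earlier collapses to $\delta^2=\alpha^2+\beta^2$. It therefore suffices to evaluate $\alpha^2$ and $\beta^2$ individually and form $\rho_0=\beta^2/(\alpha^2+\beta^2)$.

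For $\beta^2$: since $\hbS-\bfS$ is Hermitian, $\beta^2=E\{\tr\{[\hbS-\bfS]^2\}\}=E\{\tr\{\hbS^2\}\}-\tr\{\bfS^2\}$, after using $E\{\hbS\}=\bfS$ from (\ref{eq:eqnSHRll}). The complex Wishart moment $E\{\tr\{\hbS^2\}\}=\tr\{\bfS^2\}+\tr^2\{\bfS\}/K$ recorded in the proof of Theorem~\ref{theorem:one} then gives $\beta^2=\tr^2\{\bfS\}/K$. For $\alpha^2$: here $\bfS$ and $\bfT=\loading\bfI_p$ are both deterministic, so no expectation survives and $\alpha^2=\tr\{(\bfS-\loading\bfI_p)^2\}=\tr\{\bfS^2\}-2\loading\tr\{\bfS\}+p\loading^2$; substituting $\loading=\tr\{\bfS\}/p$ collapses the last two terms and leaves $\alpha^2=\tr\{\bfS^2\}-\tr^2\{\bfS\}/p$.

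Adding, $\alpha^2+\beta^2=\tr\{\bfS^2\}-\tr^2\{\bfS\}/p+\tr^2\{\bfS\}/K$, and dividing $\beta^2$ by this, then clearing the common factor $1/K$ from numerator and denominator, reproduces (\ref{eq:eqnSHRext}). I do not expect any genuine obstacle: this is essentially the $\bfT$-nonrandom specialization of Theorem~\ref{theorem:one}, and the only points to keep track of are that (i) only the single moment $E\{\tr\{\hbS^2\}\}$ is needed, the term $E\{\tr^2\{\hbS\}\}$ that featured in Theorem~\ref{theorem:one} dropping out because $\bfT$ is no longer estimated, and (ii) the particular choice $\loading=\tr\{\bfS\}/p$ is precisely what makes $-2\loading\tr\{\bfS\}+p\loading^2$ simplify to $-\tr^2\{\bfS\}/p$.
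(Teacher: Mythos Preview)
Your proposal is correct and is essentially the approach the paper has in mind: the paper's proof simply reads ``This proceeds along the same lines as for Theorem~\ref{theorem:one},'' i.e., expand the Frobenius norms and plug in the complex Wishart moment $E\{\tr\{\hbS^2\}\}=\tr\{\bfS^2\}+\tr^2\{\bfS\}/K$, which is exactly what you do. Your use of $\delta^2=\alpha^2+\beta^2$ (rather than computing $\delta^2$ directly) is a cosmetic variation and, as you note, only the one Wishart moment is needed since the target is nonrandom.
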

\begin{proof}
This proceeds along the same lines as for Theorem~\ref{theorem:one}.
\end{proof}

This case was extensively studied in \cite{LedoitWolf04}
who made many interesting observations. 
When $\bfT=\loading\bfI_p,$ then using (\ref{eq:eqnSHRff})
the eigenvalues of $\hbS$ are shrunk according to 
$ 
{\hat\lambda}_i \rightarrow (1-\rho_0) {\hat\lambda}_i+\rho_0\loading,
$ 
thus reducing the condition number. $\mu_0$ is the ``grand mean'' of both true and sample eigenvalues \cite{LedoitWolf04} and thus the  sample eigenvalues will be shrunk towards their grand mean.
In practice we will  know neither $\loading$ nor  $\rho_0=\beta^2/\delta^2$ since they both involve the unknown $\bfS.$ These quantities can be estimated via ``plug-in'' values. Following the derivation of consistent estimators in \cite{BohmvSachs09} 
we first take ${\hatloading}$ for $\mu$  
and next note that $\delta^2$ could be estimated by omitting the expected value:
\begin{eqnarray*}
{\hat{\delta^2}} &=&
 || {\hat{\bfS}} -{\hatloading}\bfI_p ||^2_{\rm F}=\tr\{[ {\hat{\bfS}} -{\hatloading}\bfI_p ][ {\hat{\bfS}} -{\hatloading}\bfI_p ]^H\}\\
&=&\tr\{ {\hat{\bfS}}^2 \}-{\tr^2\{ {\hat{\bfS}}\}\over p}
%\EqFigDef\eqnSHRgg
=\sum_{i=1}^p\sum_{j=1}^p |{\hat S}_{ij}-{\hatloading}\delta_{i,j}|^2,
%\EqFigDef\eqnSHRhh
\end{eqnarray*}
where $\delta_{i,j}$ is the usual Kronecker delta, equal to unity when $i=j,$ and zero otherwise. The estimation of $\beta^2=E\{ || {\hat{\bfS}} -\bfS ||_{\rm F}^2\}$ is less simple. Using (\ref{eq:eqnSHRll}), $\beta^2$ can be written
\begin{equation}
\beta^2=\sum_{i=1}^p\sum_{j=1}^p E\{ |{\hat{S}}_{ij}-E\{{\hat{S}}_{ij}\}|^2\}= \sum_{i=1}^p\sum_{j=1}^p \var\{ {\hat{S}}_{ij} \},
\label{eq:eqnSHRii}
\end{equation}
so it can be estimated using a form of sample variance: ${\hat{\beta}}^2=
 \sum_{i=1}^p\sum_{j=1}^p {\widehat{\var}}\{ {\hat{S}}_{ij} \}.$ Given  (\ref{eq:eqnSHRee}), for the multitaper spectral matrix estimator  we know 
$
\var\{ {\hat{S}}_{ij}\}=\var\left\{ (1/K)\sum_{k=0}^{K-1} {\hat{S}}_{k,ij}\right\}=(1/K) \var\{ {\hat{S}}_{k,ij}\},
$
where ${\hat{S}}_{k,ij}=({\hat{\bfS}}_k)_{ij}.$ An estimator for 
$\var\{ {\hat{S}}_{k,ij}\}$ is 
$
{\widehat{\var}}\{ {\hat{S}}_{k,ij}\}= (1/K) \sum_{k=0}^{K-1}| {\hat{S}}_{k,ij}-{\hat{S}}_{ij}|^2,
$
so we get
$
{\widehat{\var}}\{ {\hat{S}}_{ij}\}= (1/K^2) \sum_{k=0}^{K-1}| {\hat{S}}_{k,ij}-{\hat{S}}_{ij}|^2,
$
which gives  an estimator of $\beta^2$ in (\ref{eq:eqnSHRii}) of the form
\begin{equation}
{\hat{\beta}}^2={1\over{K^2}} \sum_{k=0}^{K-1} || {\hat{\bfS}}_k-{\hat{\bfS}} ||^2_{\rm F},
\label{eq:eqnSHRmm}
\end{equation}
so the estimator of $\rho_0$ becomes 
\begin{equation}
{\hat{\rho}}_0=
{ {{\hat{\beta}}^2}  \over {{\hat{\delta}}^2} }=
{
{    \sum_{k=0}^{K-1}|| {\hat{\bfS}}_k-{\hat{\bfS}} ||^2_{\rm F}    }
 \over 
{  K^2 
\left[\tr\{ {\hat{\bfS}}^2\}-
({{ \tr^2\{ {\hat{\bfS}}\} }/ p})
\right] 
}
}\EqualDef {\hrhoLW},
\label{eq:eqnSHRtt}
\end{equation}
where we have defined this estimator to be ${\hrhoLW}$ because
it is of the same form as derived in \cite[pp.~379--380]{LedoitWolf04} for real-valued covariance matrices. 

Finally then the proposed shrinkage estimator of the spectrum  is, from (\ref{eq:eqnSHRff}), given by
\begin{equation}
\hSLW=\left[1-
\hrhoLW\right] 
{\hat{\bfS}}+
\hrhoLW{\hatloading}\bfI_p,
\label{eq:eqnSHRkk}
\end{equation}
exactly mimicking \cite[p.~380]{LedoitWolf04}.  As a result the empirical shrinkage of the eigenvalues is given by
$ 
{\hat\lambda}_i \rightarrow (1-\hrhoLW) {\hat\lambda}_i+\hrhoLW{\hatloading}.
$ 
This approach can be used if ${\hat{\bfS}}$ is singular or ill-conditioned.
Notice that if $K<p,$ so that ${\hat{\bfS}}$ is singular, the resulting zero eigenvalues will be modified to $\hrhoLW{\hatloading}.$ 

Note that since $\delta^2=\alpha^2+\beta^2$ if we define $ {\bar{\beta}}^2=\min\{ {\hat{\beta}}^2, {\hat{\delta}}^2 \}$ then
$
{ {{\bar{\beta}}^2}  / {{\hat{\delta}}^2} }= \min\{\hrhoLW,1\}
$
provides an estimate for the shrinkage parameter which is constrained by its theoretical upper bound of unity. This would be used in practical applications.

\begin{remark}
The form of ${\hat{\beta}}^2$ given in (\ref{eq:eqnSHRmm}) for the multitaper approach is very appealing as the averaging is all carried out at the frequency of interest, and is done over tapers. In the approach of \cite[p.~921]{BohmvSachs09} the ``local variance'' averaging must be done over different  frequencies.
\end{remark}

%%%%%%%%%%%%%%%%%%%%%%%%%%%%%%%%%%%%%%%%%%%%%%%%%%%%%%%%%%%%%%%%%%%%%%%%%%%%%%%%%%%%%%%%%%%%%%%
\begin{figure}[t]
\begin{center}
\includegraphics[height=1.75in,width=3.25in]{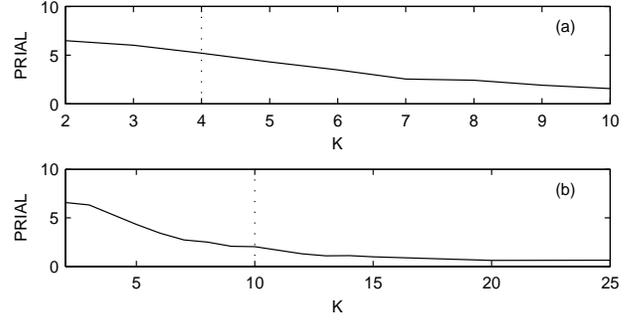}
\end{center}
\caption{ Simulated PRIAL values for (a) $\bfS_A$ for which $p=4$ and (b) $\bfS_B$ for which $p=10.$ In each case the dotted line indicates $p.$
\label{fig:PRIALS}}
\end{figure} 
%%%%%%%%%%%%%%%%%%%%%%%%%%%%%%%%%%%%%%%%%%%%%%%%%%%%%%%%%%%%%%%%%%%%%%%%%%%%%%%%%%%%%%%%%%%%%%%

\section{Rao-Blackwell Estimation}\label{sec:RaoB}
It is possible to produce another estimator from  $\hSLW$ which is at least as good under any convex loss criterion. The transformed estimator to be derived is known as the 
Rao-Blackwell estimator
and was developed for real-valued covariance matrices in the context of (\ref{eq:eqnSHRkk}) by \cite{Chen_etal10}.
The idea is that if $T(\bfJ_0,\ldots,\bfJ_{K-1})$ is a sufficient statistic
 for $\bfS,$ and if ${\cal S}(\bfJ_0,\ldots,\bfJ_{K-1})$ is an estimator for $\bfS,$ then the conditional expectation ${\cal S}'(\bfJ_0,\ldots,\bfJ_{K-1}){\displaystyle{\EqualDef}} E\{ {\cal S}(\bfJ_0,\ldots,\bfJ_{K-1})|T\}$
is never worse than ${\cal S}(\bfJ_0,\ldots,\bfJ_{K-1})$ under any convex loss criterion.  To see this, start with the risk
$R(\bfS,{\cal S})$ of the original estimator \cite[p.~483]{CasellaBerger90}
\begin{eqnarray}
R(\bfS,{\cal S})
&=& E_{\bfS}\{ L(\bfS,{\cal S}(\bfJ_0,\ldots,\bfJ_{K-1}))\}
\label{eq:eqnSHRai}
\\
&=&E_{\bfS}\{ E\{  L(\bfS,{\cal S}(\bfJ_0,\ldots,\bfJ_{K-1}))|T\}\}\nonumber\\
&\geq &
E_{\bfS}\{L(\bfS, E\{  {\cal S}(\bfJ_0,\ldots,\bfJ_{K-1})|T\})\}\nonumber\\
&=&E_{\bfS}\{L(\bfS, {\cal S}'(\bfJ_0,\ldots,\bfJ_{K-1}))\}
\label{eq:eqnSHRaj}\\
&=&R(\bfS,{\cal S}').\nonumber
\end{eqnarray}
(Here the second line uses the rule of iterated expectation
and the third line follows from Jensen's inequality and the assumed convexity of the loss function.)

In the context of spectral matrix estimation we note that under the independent complex Gaussian assumption for the $\bfJ_0,\ldots,\bfJ_{K-1},$ (\ref{eq:eqnSHRee}), that $\hbS$ is  a sufficient statistic for estimating $\bfS,$ \cite[Theorem~4.2]{Goodman63}; this is true for $K\geq p$ and $K<p.$ Then, the Rao-Blackwell estimator takes the form
$\hSRB = E\{ \hSLW | \hbS\}$ and 
\begin{eqnarray*}
R(\bfS,\hSLW)&=&E_{\bfS}\{ ||\hSLW-\bfS||_{\rm F}^2\}
%\label{eq:eqnSHRai}
\\
&=&E_{\bfS}\{ E\{  ||\hSLW-\bfS||_{\rm F}^2|\hbS\}\}\nonumber\\
&\geq &
E_{\bfS}\{||E\{ \hSLW | \hbS\} -\bfS||_{\rm F}^2 \}\nonumber\\
&=&E_{\bfS}\{ ||\hSRB-\bfS||_{\rm F}^2\}
=R(\bfS,\hSRB).
%\label{eq:eqnSHRaj}\\
\end{eqnarray*}
So,
\begin{eqnarray*}
\hSRB &=& E\{ \hSLW | \hbS\}=
E\{ \left[1-
\hrhoLW\right] 
{\hat{\bfS}}+
\hrhoLW{\hatloading}\bfI_p | {\hbS}\}\\
&=&[1-E\{ \hrhoLW|{\hbS}\}]{\hbS}+E\{\hrhoLW {\hatloading}|{\hbS}\}\bfI_p\\
&{\displaystyle{\EqualDef}} &[1-\hrhoRB]{\hbS}+\hrhoRB{\hatloading}\bfI_p,
\end{eqnarray*}
where the Rao-Blackwell shrinkage parameter $\hrhoRB$ is 
\begin{equation}
\hrhoRB \EqualDef E\{ \hrhoLW|{\hbS}\}=
{
{ E\left\{   \sum_{k=0}^{K-1}|| {\hat{\bfS}}_k-{\hat{\bfS}} ||^2_{\rm F}\,\, | \hbS\right\}   }
 \over 
{  K^2 
\left[\tr\{ {\hat{\bfS}}^2\}-
({{ \tr^2\{ {\hat{\bfS}}\} }/ p})
\right] 
}
}.\label{eq:formRB}
\end{equation}

The form of the shrinkage parameter was derived in  \cite{Chen_etal10} for real-valued covariance matrices.
For our complex-valued case the form is substantially different. 
\begin{theorem}\label{theorem:three}
Under the assumption (\ref{eq:eqnSHRee}), $\hrhoRB$ in (\ref{eq:formRB}) takes the simple form
\begin{equation}
\hrhoRB ={
{
\tr^2\{{\hbS}\}- (\tr\{ {\hbS}^2\}/K) 
}
\over
{
(K+1)\left[ \tr\{{\hbS}^2\}-(\tr^2\{{\hbS}\}/p)\right]
}
}.
\label{eq:eqnSHRah}
\end{equation}
\end{theorem}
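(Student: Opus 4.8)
The plan is to reduce the claim to the evaluation of a single conditional expectation that can then be computed with random matrix theory, since the denominator of (\ref{eq:formRB}) is already $\hbS$-measurable. First I would simplify the numerator: using $\hbS=\frac1K\sum_{k}{\hat{\bfS}}_k$ together with the Hermiticity of ${\hat{\bfS}}_k$ and $\hbS$, a short expansion gives $\sum_{k}\|{\hat{\bfS}}_k-\hbS\|_{\rm F}^2=\sum_{k}\tr\{{\hat{\bfS}}_k^2\}-K\tr\{\hbS^2\}$, and since ${\hat{\bfS}}_k=\bfJ_k\bfJ_k^H$ with $\bfJ_k$ a $p$-vector we have $\tr\{{\hat{\bfS}}_k^2\}=(\bfJ_k^H\bfJ_k)^2=\|\bfJ_k\|^4$. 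Hence everything comes down to finding $E\{\sum_k\|\bfJ_k\|^4\mid\hbS\}$ and substituting it into (\ref{eq:formRB}).

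To obtain that conditional expectation I would use that $\hbS$ is a sufficient statistic for $\bfS$ under (\ref{eq:eqnSHRee}) (\cite[Theorem~4.2]{Goodman63}), so the conditional law of $\bfJ$ given $\hbS$ does not depend on $\bfS$ and may be computed in the reference model $\bfS=\bfI_p$, where $\bfJ$ has i.i.d.\ standard complex Gaussian entries and $K\hbS=\bfJ\bfJ^H$. There I would factor $\bfJ$ so as to separate the part measurable with respect to $\bfJ\bfJ^H$ from a residual unitary factor: for $K\ge p$, $\bfJ=(K\hbS)^{1/2}\bfV$ with $\bfV\bfV^H=\bfI_p$; for $K<p$, $\bfJ=\bfU_1\bfLambda_1^{1/2}\bfV^H$ where $(\bfU_1,\bfLambda_1)$ carry the non-zero part of the eigen-decomposition of $K\hbS$ and $\bfV$ is $K\times K$ unitary. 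In either regime $\bfJ\bfJ^H$ is unchanged by the right action $\bfJ\mapsto\bfJ\bfQ$ of a unitary $\bfQ$, under which the standard complex Gaussian is invariant; hence, conditionally on $\hbS$, the residual factor is Haar distributed and independent of $\hbS$, and $\|\bfJ_k\|^2$ becomes a quadratic form $\bfw_k^H\bfLambda\bfw_k$ in which $\bfLambda$ collects the eigenvalues of $K\hbS$ and $\bfw_k$ is the $k$th column of a Haar-distributed unitary matrix.

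The random matrix input enters in the last step: diagonalize, use rotational invariance to replace the eigenvector matrix by the identity, and evaluate with the low-order moments of a column $\bfw$ of a Haar-distributed unitary (equivalently, of a uniform point on the complex unit sphere), namely $E\{|w_i|^4\}=2/(K(K+1))$ and $E\{|w_i|^2|w_j|^2\}=1/(K(K+1))$ for $i\ne j$. Summing over the $K$ columns yields $E\{\sum_k|w_{ik}|^4\}=2/(K+1)$ and $E\{\sum_k|w_{ik}|^2|w_{jk}|^2\}=1/(K+1)$, so with $\lambda_i$ the eigenvalues of $K\hbS$,
\[
E\bigl\{\textstyle\sum_k\|\bfJ_k\|^4 \,\big|\, \hbS\bigr\}
=\tfrac{1}{K+1}\bigl[\textstyle\sum_i\lambda_i^2+(\sum_i\lambda_i)^2\bigr]
=\tfrac{K^2}{K+1}\bigl[\tr\{\hbS^2\}+\tr^2\{\hbS\}\bigr].
\]
Feeding this back through the first step, the numerator of (\ref{eq:formRB}) equals $\tfrac{K}{K+1}[K\tr^2\{\hbS\}-\tr\{\hbS^2\}]$; dividing by $K^2[\tr\{\hbS^2\}-\tr^2\{\hbS\}/p]$ gives (\ref{eq:eqnSHRah}).

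I expect the algebra throughout to be routine; the real obstacle is the structural part of the second paragraph — justifying the reduction to $\bfS=\bfI_p$ and, especially, showing that the residual factor in the factorization is Haar distributed and independent of $\hbS$, and carrying this out uniformly over the two cases $K\ge p$ and $K<p$, where both the factorization and the relevant matrix dimension change. Once that is in hand, the Haar moments are classical and the rest is mechanical.
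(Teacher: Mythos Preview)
Your proposal is correct and the overall architecture matches the paper's: reduce the numerator to $\sum_k E\{\|\bfJ_k\|^4\mid\hbS\}-K\tr\{\hbS^2\}$ (the paper's Lemma~3), write $\|\bfJ_k\|^2$ as a quadratic form in a column of a Haar unitary and the eigenvalues of $K\hbS$, and evaluate with the moments $E\{|v_{kj}|^4\}=2/(K(K+1))$, $E\{|v_{kj}|^2|v_{kl}|^2\}=1/(K(K+1))$ (the paper's Lemmas~2 and~4). The one genuine methodological difference is in how the Haar/independence step is justified. You invoke sufficiency of $\hbS$ to transport the conditional law to the isotropic model $\bfS=\bfI_p$ and then argue purely by right-unitary invariance of the standard complex Gaussian; the paper instead works with general $\bfS$ throughout, takes the full SVD $\bfJ=\bfU\bfPsi\bfV^H$, and proves directly (its appendix Theorem~4) that $\bfV$ is Haar on ${\cal U}(K)$ and independent of $(\bfU,\bfOmega)$ by factoring the density via the SVD Jacobian of \cite{Ratnarajah05}. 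Your sufficiency shortcut is more conceptual and avoids the volume-element computation; the paper's route is heavier but self-contained. One bookkeeping remark: by using the full SVD the paper obtains a single $K\times K$ Haar factor $\bfV$ in both regimes $K\lessgtr p$, so the two-case split you anticipate as an obstacle largely disappears---your separate $K\ge p$ factorization with a $p\times K$ semi-unitary also works (its entries agree in law with a sub-block of a $K\times K$ Haar unitary, cf.\ the paper's Lemma~5), but the full SVD handles both at once.
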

\begin{proof}
This uses invariance properties of the random matrix $\bfJ$ and the random unitary matrices arising from its singular value decomposition. Details are given in Appendix\ref{app:th3}:
put the results of Lemma~\ref{lemma:three} and Lemma~\ref{lemma:four}  into the numerator of (\ref{eq:formRB}), then (\ref{eq:eqnSHRah}) readily follows.
\end{proof}

From (\ref{eq:eqnSHRai}) and (\ref{eq:eqnSHRaj}) we have that
$
E_{\bfS}\{ \|\hSLW-\bfS\|^2_{\rm F}\}\geq
E_{\bfS}\{ \|\hSRB-\bfS\|^2_{\rm F}\}.
$ 
It is common to look at such a difference via the percentage relative improvement in average loss
(PRIAL) defined as
\begin{equation*}%\label{eq:PRIALdef}
{\rm PRIAL}\EqualDef 100
\frac{
E_{\bfS}\{ \|\hSLW-\bfS\|^2_{\rm F}\}-
E_{\bfS}\{ \|\hSRB-\bfS\|^2_{\rm F}\}
}
{
E_{\bfS}\{ \|\hSLW-\bfS\|^2_{\rm F}\}
} .
\end{equation*}
To illustrate this quantity two different  Hermitian matrices, $\bfS_A$
and $\bfS_B$ were utilized. 
$\bfS_A$ is the $4\times 4$ `random' choice
$$
\bfS_A=
\left[
\begin{matrix}
10 & 7+\eye & 8 &  4 \\
7-\eye & 12 & 6+2\eye & 5-\eye\\
8 & 6-2\eye & 15 & 9-3\eye \\
4 & 5+\eye & 9+3\eye & 10
\end{matrix}
\right]
$$
and the second $\bfS_B$ is set equal to a $10\times 10$ estimated spectral matrix  from an EEG dataset.
 From each of these $\bfS$ matrices,
a set of $m=5000$ matrix estimates $\hbS_1,\ldots,\hbS_m$ were simulated satisfying (\ref{eq:eqnSHRdd}) and (\ref{eq:eqnSHRee}).
For each replication, estimates were constructed of the form  $\hSLW$ and $\hSRB,$ and the Frobenius norm between the estimate and the true matrix ($\bfS_A$ or $\bfS_B$) was found. The results were averaged over the 5000 replications to give estimates of 
$E_{\bfS}\{ \|\hSLW-\bfS\|^2_{\rm F}\}$ and 
$E_{\bfS}\{ \|\hSRB-\bfS\|^2_{\rm F}\}.$
This was done for $K<p$ (singular case) and $K\geq p$ (non-singular). The results are shown in Fig.~\ref{fig:PRIALS}. 
Behaviour seems quite smooth as $K$ crosses from the
singular to non-singular cases.
The Rao-Blackwell estimator offers a useful improvement over the Ledoit-Wolf estimator. In these examples  the PRIAL decreases almost monotonically with increasing degrees of freedom, $K,$ but this behaviour need not hold for other choices for $\bfS.$

Note that, analogously to the Ledoit-Wolf estimate of the shrinkage parameter, $\min\{ \hrhoRB,1\}$
provides an estimate for the shrinkage parameter which is constrained by its theoretical upper bound of unity, and would be used in practice.

\begin{remark}
In \cite{Chen_etal10} an oracle approximating shrinkage (OAS)  estimator was given. The analogous estimator in the complex case for (\ref{eq:eqnSHRae}) was found to be unpredictable. For example, for $\bfS_A$  while for $K=2$ the PRIAL (comparing to the Ledoit-Wolf estimator) was increased from 6.5\% (Rao-Blackwell) to 15\% (OAS), for $K=4$ it decreased from 5.2\% (Rao-Blackwell) to 1.0\% (OAS). The behaviour of the Rao-Blackwell estimator seems better suited for practical use.
It should also be pointed out that the oracle in (\ref{eq:eqnSHRae}) is optimal for the stochastic target, while $\hrhoLW$ and $\hrhoRB$ were developed for the deterministic target optimization.
\end{remark}
%%%%%%%%%%%%%%%%%%%%%%%%%%%%%%%%%%%%%%%%%%%%%%%%%%%%%%%%%%%%%%%%%%%%%%%%%%%%%%%%%%%%%%%%%%%%%%%
\begin{figure}[t]
\begin{center}
\includegraphics[height=2.75in,width=3.25in]{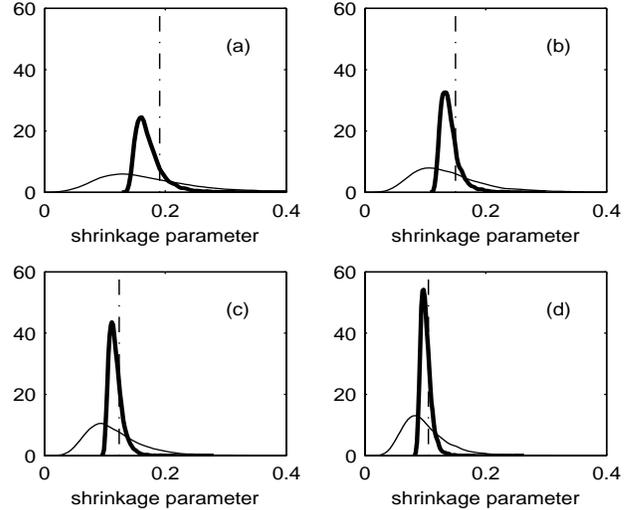}
\end{center}
\caption{Simulated distributions for $\hrhoLW$ (thin line) and 
$\hrhoRB$ (thick line) for the $10\times 10$ matrix $\bfS_B$ for (a) $K=6,$ (b) $K=8,$ (c) $K=10$ and (d) $K=12.$ The vertical dash-dot line shows the oracle solution $\rho_0$ of (\ref{eq:eqnSHRext}).
\label{fig:shrinkp}}
\end{figure} 
%%%%%%%%%%%%%%%%%%%%%%%%%%%%%%%%%%%%%%%%%%%%%%%%%%%%%%%%%%%%%%%%%%%%%%%%%%%%%%%%%%%%%%%%%%%%%%%

Fig.~\ref{fig:shrinkp} compares the empirical distributions of $\hrhoLW$ and 
$\hrhoRB$ for the matrix $\bfS_B$ $(p=10)$ for (a) $K=6,$ (b) $K=8,$ (c) $K=10$ and (d) $K=12.$ As expected as $K$ increases, $\hrhoLW$ and 
$\hrhoRB$ reduce in variance and converge toward the oracle solution. The distribution of $\hrhoRB$ is always preferable to that of $\hrhoLW.$

In the rest of the paper we turn our attention to estimation of inverse spectral matrices.

\section{Rao-Blackwell Estimation for Inverse Spectral Matrices}\label{sec:invRaoB}

We denote the inverse of the spectral matrix, i.e., the precision matrix, by
$\bfC\, {\displaystyle{\EqualDef}}\, \bfS^{-1}.$  We shall firstly show that $\hSRB^{-1}$  is actually a ``Rao-Blackwellized'' estimator for $\bfC.$

\begin{lemma}
The inverse, $\hSRB^{-1},$ of the Rao-Blackwell estimator, $\hSRB,$ is in the form of a ``Rao-Blackwellized'' estimator for $\bfC.$
\end{lemma}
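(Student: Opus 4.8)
The plan is to make one substantive observation and then let the Rao-Blackwell machinery of (\ref{eq:eqnSHRai})--(\ref{eq:eqnSHRaj}) do the rest. The observation is that $\hSRB$, and hence $\hSRB^{-1}$, is a deterministic (measurable) function of $\hbS$ \emph{alone}. This is exactly what Theorem~\ref{theorem:three} delivers: the conditional expectation $\hrhoRB = E\{\hrhoLW\,|\,\hbS\}$ collapses to the closed form (\ref{eq:eqnSHRah}), which involves only $\tr\{\hbS\}$, $\tr\{\hbS^2\}$, $K$ and $p$, every one of which is a function of $\hbS$. Consequently $\hSRB = (1-\hrhoRB)\hbS + \hrhoRB\hatloading\bfI_p = g(\hbS)$ for a fixed map $g$, and (using the practical truncation $\min\{\hrhoRB,1\}\in(0,1]$, or simply noting that the numerator $\tr^2\{\hbS\}-\tr\{\hbS^2\}/K$ and the bracket $\tr\{\hbS^2\}-\tr^2\{\hbS\}/p$ in (\ref{eq:eqnSHRah}) are positive almost surely for $K\geq 2$, $p\geq 2$) the matrix $\hSRB$ is Hermitian positive definite a.s. --- a genuine convex combination of the positive semidefinite $\hbS$ with the positive definite $\hatloading\bfI_p$. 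Hence $\hSRB$ is a.s. invertible and $\hSRB^{-1} = g(\hbS)^{-1}$ is again a function of $\hbS$ only.

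Next I would transfer the sufficiency statement. As recalled in Section~\ref{sec:RaoB}, under the complex Gaussian model (\ref{eq:eqnSHRee}) the multitaper estimate $\hbS$ is a sufficient statistic for $\bfS$ (\cite[Theorem~4.2]{Goodman63}), and this holds both for $K\geq p$ and for $K<p$. Since $\bfC = \bfS^{-1}$ is a one-to-one reparametrization of the model --- inversion being a bijection of the cone of $p\times p$ Hermitian positive definite matrices --- the distribution family indexed by $\bfC$ is the very same family, so $\hbS$ is equally a sufficient statistic for $\bfC$.

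Finally I would conclude. Because $\hSRB^{-1} = g(\hbS)^{-1}$ is a function of the sufficient statistic $\hbS$, it equals its own conditional expectation, $\hSRB^{-1} = E\{\hSRB^{-1}\,|\,\hbS\}$, so it literally has the form $E\{\widehat{\bfC}\,|\,\hbS\}$ of a Rao-Blackwellized estimator of $\bfC$. Moreover, running the chain (\ref{eq:eqnSHRai})--(\ref{eq:eqnSHRaj}) verbatim with $\bfC$ in place of $\bfS$ and the convex loss $L(\bfC,\widehat{\bfC}) = \|\widehat{\bfC}-\bfC\|^2_{\rm F}$ shows that $\hSRB^{-1}$ weakly dominates, under this (indeed any convex) loss, every estimator $\widehat{\bfC}(\bfJ_0,\ldots,\bfJ_{K-1})$ of $\bfC$ whose conditional expectation given $\hbS$ is $\hSRB^{-1}$; equivalently, no further Rao-Blackwellization can improve it, which is the precise sense in which it ``is'' a Rao-Blackwellized estimator of $\bfC$.

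The main obstacle --- really the only place where care is needed --- is the first step, and in particular resisting the tempting but \emph{false} identification $\hSRB^{-1} = E\{\hSLW^{-1}\,|\,\hbS\}$: operator convexity of matrix inversion together with Jensen's inequality gives only $\hSRB^{-1} = (E\{\hSLW\,|\,\hbS\})^{-1} \preceq E\{\hSLW^{-1}\,|\,\hbS\}$, typically with strict inequality, so $\hSRB^{-1}$ is \emph{not} the Rao-Blackwellization of $\hSLW^{-1}$. The clean route is therefore not to try to Rao-Blackwellize $\hSLW^{-1}$ but to use Theorem~\ref{theorem:three} to recognize $\hSRB^{-1}$ directly as a function of the sufficient statistic $\hbS$; the remaining ingredients (a.s.\ invertibility of $\hSRB$, and sufficiency of $\hbS$ for $\bfC$) are then routine.
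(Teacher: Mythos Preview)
Your proposal is correct and follows essentially the same route as the paper: establish that $\hbS$ is sufficient for $\bfC$ (the paper does this via the factorization theorem applied to the likelihood written in terms of $\bfC$, you via the equivalent reparametrization argument), observe that $\hSRB^{-1}$ is a function of $\hbS$ alone, and conclude $E\{\hSRB^{-1}\,|\,\hbS\}=\hSRB^{-1}$. Your additional remarks on a.s.\ invertibility and the Jensen-inequality caveat about $\hSRB^{-1}\neq E\{\hSLW^{-1}\,|\,\hbS\}$ are correct embellishments but not needed for the lemma as stated.
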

\begin{proof}
Firstly we note that $\hbS$ is a sufficient statistic for $\bfC.$ To see this we note that the  probability density function for $\bfJ_0,\ldots,\bfJ_{K-1}$ can be written
$$
p(\bfJ_0,\ldots,\bfJ_{K-1}; \bfC)= \pi^{-pK} {\rm det}^K\{\bfC\}
\exp[-K\tr\{ \bfC \hbS \}].
$$
The part that depends on $\bfC$ only depends on the sample through $\hbS,$ so this is a sufficient statistic for $\bfC$ by the factorization theorem \cite{HalmosSavage49}. Now
$
\hSRB(\hbS)= E\{ \hSLW | \hbS\}
$
is an estimator for $\bfS,$ so $\hSRB^{-1}(\hbS)$ is an estimator for $\bfC$. Recall the general result that for a function $h(\cdot),$
$$
E\{ h(\hbS)| \hbS\}=h(\hbS),
$$
so 
$$
E\{ \hSRB^{-1}(\hbS)| \hbS\}= \hSRB^{-1}(\hbS)\EqualDef {\hat \bfC}_{\rm RB}(\hbS),
$$
which completes the proof.
\end{proof}
Clearly we can use ${\hat \bfC}_{\rm RB}(\hbS)$ to estimate $\bfC$ when $\hbS$ is singular, $K<p,$ or non-singular, $K\geq p.$
% RB model is Rob W's
% Phi=[0.2 0 -0.1 0 -0.5;...
%     0.4 -0.2 0 0.2 0;...
%     -0.2 0 0.3 0 0.1;...
%     0.3 0.1 0 0.3 0;...
%     0 0 0 0.5 0.2]; 
 
In order to illustrate the Rao-Blackwellized estimator for $\bfC$ a stable and stationary vector autoregressive 
process of order 1 and dimension $p=5$ 
(VAR$_5(1)$) was utilized. The process was simulated 5000 times with $N=1000$ and $K=4.$
Fig.~\ref{fig:SinverseRB} shows the resulting (estimated) PRIAL 
\begin{equation}\label{eq:PRIAL_RB}
{\rm PRIAL}\EqualDef 100
\frac{
E_{\bfS}\{ \|{\hat \bfC}_{\rm LW}-\bfC\|^2_{\rm F}\}-
E_{\bfS}\{ \|{\hat \bfC}_{\rm RB}-\bfC\|^2_{\rm F}\}
}
{
E_{\bfS}\{ \|{\hat \bfC}_{\rm LW}-\bfC\|^2_{\rm F}\}
},
\end{equation}
where ${\hat \bfC}_{\rm LW}=\hSLW^{-1}.$
The PRIAL reaches as much as   15\% for some frequencies showing that the Rao-Blackwell approach can be a worthwhile improvement over the Ledoit-Wolf estimator even for dimension $p=5.$
%Fig.~\ref{fig:Sinverse}(b) shows the PRIAL %when 
%the calculable ${\hat \bfC}_{\rm RB}$ is %replaced by the unknown estimator
%%%%%%%%%%%%%%%%%%%%%%%%%%%%%%%%%%%%%%%%%%%%%%%%
\begin{figure}[t]
\begin{center}
\includegraphics
[height=2in,width=3in]{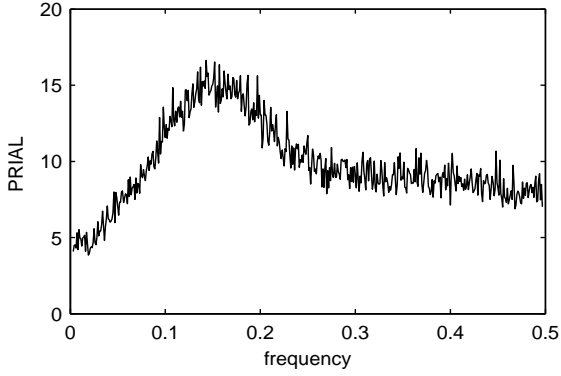}
%{Sinverse_RaoB_OracleDet.eps}
\end{center}
\caption{Estimated PRIAL (\%) (improvement of ${\hat \bfC}_{\rm RB}$ over
${\hat \bfC}_{\rm LW}$) for a VAR$_5(1)$ time series example.
\label{fig:SinverseRB}}
\end{figure} 
%%%%%%%%%%%%%%%%%%%%%%%%%%%%%%%%%%%%%%%%%%%%%%%

\section{Random Matrix Approach to Inverse Spectral Matrices}\label{sec:Marzetta}
Marzetta {\it et al.} \cite{Marzetta_etal11} examined how to manipulate a singular ($K < p$) covariance matrix constructed from circularly-symmetric complex vectors to obtain a non-singular version. 
In the context of spectral matrices, we can explain their idea as follows.

Firstly an ensemble of $L\times p$ {\it random}\/  matrices $\bfPhi\in{\mathbb{C}}^{L\times p},$ with $L\leq K <p,$ is introduced, 
which have orthonormal rows, so that
$\bfPhi\bfPhi^H=\bfI_L.$ Such matrices are often called `semi-unitary' and 
were chosen to be bi-unitarily invariant
(see Appendix\ref{app:invariance}). Such matrices are  called ``isotropically random''  with the Haar distribution in \cite{Marzetta_etal11}.

The $L\times L$ matrix  $\bfPhi{\hat {\bfS}}\bfPhi^H$  is invertible (with probability one). 
\cite{Marzetta_etal11} advocate
inverting this matrix and   projecting out the result to a $p\times p$ matrix again using the random semi-unitary matrix
$\bfPhi.$ Then taking the conditional expectation over the semi-unitary ensemble, 
 gives
$$
{\hat {\bfC}}^{\star}_L( {\hat {\bfS}}) \EqualDef
(p/L)\,E_{\bfPhi}\{ \bfPhi^H[\bfPhi{\hat {\bfS}}\bfPhi^H]^{-1}\bfPhi\,\big |\,{\hat {\bfS}}\},
%\eqno\EqFigDef\eqnSHRxx
$$
as an estimator for $\bfC.$ Although not given explicitly in \cite{Marzetta_etal11} a rescaling by $(p/L)$ has been included as in \cite{TucciWang12} so that the estimate of the inverse of the identity matrix is the identity. The term $L$ such that $L<K<p$ is a parameter to be chosen; its determination is discussed later.

Since here $K<p,$ the Hermitian matrix ${\hat\bfS}$ has rank $r=\min\{p,K\}=K$ with probability 1. Its spectral decomposition is ${\hat{\bfS}}={\bfU} {\bfLambda} {{\bfU}^H},$ where 
$${\bfLambda}={\rm diag}\{ {\lambda}_1,\ldots,{\lambda}_K,\underbrace{\;0, \ldots, 0   }_{p-K\rm\;times}\}
$$  is the diagonal matrix of estimated eigenvalues, (ordered largest to smallest), and ${{\bfU}}$ is the unitary matrix having corresponding eigenvectors for its columns. From \cite{Marzetta_etal11} it follows that
\begin{equation}
{\hat {\bfC}}^{\star}_L( {\hat {\bfS}})=(p/L){{\bfU}}\, {\hat {\bfC}}^{\star}_L( { {\bfLambda}})\, {{\bfU}}^H,
\label{eq:eqnSHRad}
\end{equation}
so the required estimator can be constructed from ${\hat {\bfC}}^{\star}_L( {{\bfLambda}}).$ Further, \cite{Marzetta_etal11} show that
\begin{equation}\label{eq:further}
{\hat {\bfC}}^{\star}_L(  {\bfLambda})={\rm diag}\{ {\lambda}^{\star}_1,\ldots,{\lambda}^{\star}_K,{\lambda}^{\star}, \ldots, {\lambda}^{\star}   \},
\end{equation}
where ${\lambda}^{\star}_i,\,i=1,\ldots,K$ are modified versions of 
 ${\lambda}_i,\,i=1,\ldots,K,$ and the $p-K$ zero eigenvalues of ${\hat{\bfS}}$  have been replaced by $p-K$ copies of a single value, ${\lambda}^{\star}.$

\subsection{Computations via simulations}
The computation of ${{\lambda}}^{\star}_i,\,i=1,\ldots,K$ and ${{\lambda}}^{\star}$ can be carried out purely via simulation, as done by
\cite{Marzetta_etal11} 
(personal correspondence with Gabriel Tucci). However,
for a given ${\hat{\bfS}}$, in order to get good agreement between the estimator of $\bfS$  derived by averaging many copies of $\bfPhi^H[\bfPhi{{\bfLambda}}\bfPhi^H]^{-1}\bfPhi$ for different $\bfPhi,$ (followed by premultiplication by ${{\bfU}}$ and post-multiplication by ${{\bfU}}^H$), and the analytic estimator to be described below, the number of copies needing to be averaged is typically very large. For example the order of $10^6$ $\bfPhi$'s were required for the $p=10$ channel EEG example to achieve agreement to two significant figures. The corresponding compute-time cost turned out to be around 5000 times as heavy, about 500s for the simulation approach versus 0.1s for the analytic scheme at any frequency.
 Even with modern computational power this sort of simulation burden is not suitable in a spectral matrix context where 
$\bfC$  must be estimated at possibly thousands of  frequencies.

\subsection{Computations using analytic methods}
We now examine how to  compute (\ref{eq:further})  using analytic methods.
Define $\bfD_K={\rm diag}\{{{\lambda}}_1,\ldots,{{\lambda}}_K\}.$
Then \cite[Theorem 1]{Marzetta_etal11}, for a continuous function $g(\cdot),$
\begin{equation}
\int_{\Omega_{0}}\frac{1}{K} \tr\{ g(\bfPhi^H_0 \bfD_K\bfPhi_0)\}\dif\bfPhi_0
=\!
\sum_{k=0}^{L-1} \frac
{(K-(k+1))!{\det\{ \bfG_k\}} }
{(L-(k+1))!{\det\{ \bfV_K \}} }
\label{eq:eqnSHRyy}
\end{equation}
Here $
{\Omega_{0}}{\displaystyle{\EqualDef}}  
\{\bfPhi_0\in{\mathbb{C}}^{K\times L}:\bfPhi_0^H\bfPhi_0=\bfI_L\},$ these  matrices with orthonormal columns again being bi-unitarily invariant (Haar distributed) --- see Lemma~\ref{Def:nonsquare} of Appendix\ref{app:invariance}.
$\bfV_K$ is the Vandermonde matrix associated with $\bfD_K$ given in the `flipped' form
$$
\bfV_K=
\left[
\begin{matrix}
{{\lambda}}_1^{K-1} & {{\lambda}}_2^{K-1} & \cdots & {{\lambda}}_K^{K-1}\\
{{\lambda}}_1^{K-2} & {{\lambda}}_2^{K-2} & \cdots & {{\lambda}}_K^{K-2}\\
\vdots & \vdots & \vdots & \vdots\\
{{\lambda}}_1 & {{\lambda}}_2 & \cdots & {{\lambda}}_K\\
1 & 1 & \cdots & 1
\end{matrix}
\right],
$$
and $\bfG_k$ is the matrix defined by replacing row $(k+1)$ of the Vandermonde matrix $\bfV_K,$ namely $[  {{\lambda}}_1^{K-(k+1)},\ldots
{{\lambda}}_K^{K-(k+1)}],$ by the row
\begin{eqnarray}
&&\!\!\!\!\!\!\!\!\!\!\!\!\!\!\!\!\!\!\!\!\!\!\!\!
\left[ I^{(K-L)}\{x^{L-(k+1)} g(x) \}\Big|_{x={{\lambda}}_1},\ldots,\right.\nonumber\\
&&
\left. I^{(K-L)}\{x^{L-(k+1)} g(x) \}\Big|_{x={{\lambda}}_K} \right],
\label{eq:eqnSHRzz}
\end{eqnarray}
where $I^{(q)}\{ f(x)\}$ denotes $q$ integrations of $f(x).$ 

We consider first the computation of  ${{\lambda}}^{\star}_i,$ for which \cite[p.~6265]{Marzetta_etal11}
\begin{equation}
{{\lambda}}^{\star}_i={
{\partial}
\over 
{\partial {{\lambda}}_i}
}
\int_{\Omega_{0}}\frac{1}{K} \tr\{ \log(\bfPhi_0^H \bfD_K\bfPhi_0)\}\dif\bfPhi_0.
\label{eq:eqnSHRac}
\end{equation}
The integral component is given by (\ref{eq:eqnSHRyy})\/ with $g(\cdot)\equiv\log(\cdot).$ So to compute $\bfG_k$ via (\ref{eq:eqnSHRzz}) we need to know terms like
$I^{(q)}\{ x^n \log x\}$ for $q\geq 1, n \geq 0.$ This is found to be,
$$
I^{(q)}\{ x^n \log x\}=
{
{x^{n+q} n!}
\over
{(n+q)!}
}
\left[\log x-\sum_{j=1}^q {1\over{n+j}}\right].
$$
To calculate ${{\lambda}}^{\star}_i$ in (\ref{eq:eqnSHRac}) we can now use (\ref{eq:eqnSHRyy}),
\begin{eqnarray*}
{{\lambda}}^{\star}_i
&=&
\sum_{k=0}^{L-1} 
\frac
{(K-(k+1))! }
{(L-(k+1))! }
\frac
{\partial}
{\partial {{\lambda}}_i}
\left[
\frac
{\det\{ \bfG_k\}}
{\det\{ \bfV_K \}}
\right].
\end{eqnarray*}

The partial derivative on the right is given by
$$
\frac
{
{\det\{ \bfV_K\}}
\frac
{\partial}
{\partial {{\lambda}}_i}
{\det\{ \bfG_k\}}
-
{\det\{ \bfG_k\}}
\frac
{\partial}
{\partial {{\lambda}}_i}
{\det\{ \bfV_K\}}
}
{
{\det^2\{ \bfV_K \}}
}.
$$
To find the derivative of the determinant of a $K\times K$ matrix $\bfM$ ($\bfG_k$ or $\bfV_K$) we first differentiate all entries of the matrix $\bfM$ 
by ${{\lambda}}_i;$ denote the $(l,m)$th resulting entry  by $A_{l,m}.$
Now let $\bfB$ be the cofactor matrix corresponding to
$\bfM.$ For $1\leq l,m\leq K$ define
$
D_{l,m}= A_{l,m} B_{l,m},
$
the element-by-element multiplication of the matrices $\bfA$ and $\bfB.$ Then the derivative of the determinant is given by \cite[eqn.~6]{Golberg72}
$$
\frac 
{\partial} 
{\partial {{\lambda}}_i}
\det\{ \bfM\}=\sum_{l,m=1}^K D_{l,m}.
$$
For the matrix $\bfV_K$,
$$
A_{l,m}=
\begin{cases}
(K-l){{\lambda}}_i^{K-(l+1)},& {\rm if }\,\, m=i;\\
0,& {\rm otherwise}.
\end{cases}
$$
For $\bfG_k,$ entry $A_{l,m}$ is given by
$$
\begin{cases}
(K-l){{\lambda}}_i^{K-(l+1)},& {\rm if }\,m=i, l\not=k+1;\\
\frac
{\partial} 
{\partial {{\lambda}}_i}
I^{(K-L)}\{x^{L-(k+1)} \log(x) \}\Big|_{x={{\lambda}}_i},& {\rm if }\,m=i, l=k+1;\\
0,& {\rm otherwise},
\end{cases}
$$
where of course we can simplify the second term to
$$
I^{(K-L-1)}\{x^{L-(k+1)} \log(x) \}\Big|_{x={{\lambda}}_i}.
$$
The cofactor matrices for $\bfG_k$ or $\bfV_K$ can be readily found using standard matrix software. Hence we are able to compute ${{\lambda}}^{\star}_i, \,i=1,\ldots,K.$

The computation of ${{\lambda}}^{\star}$ is straightforward. We know
\cite[p.~6264]{Marzetta_etal11}
that for $L<K,$
$
{{\lambda}}^{\star}={
{\det\{ \bfG\}}
/
{\det\{ \bfV_K \}}
}
$
with 
$\bfG$ being the matrix defined by replacing the $L$th row of the Vandermonde matrix $\bfV_K,$ namely $[  {{\lambda}}_1^{K-L},\ldots,
{{\lambda}}_K^{K-L}],$ by the row 
$
\left[
{{\lambda}}_1^{K-(L+1)} \log {{\lambda}}_1,\ldots,
{{\lambda}}_K^{K-(L+1)} \log {{\lambda}}_K\right].
$
We are thus  able to compute all the components of
(\ref{eq:further}) 
 and therefore 
${\hat {\bfC}}^{\star}_L( {\hat {\bfS}})$ in 
(\ref{eq:eqnSHRad}).

\subsection{Choice of $L$}
In practice we must choose a suitable value  of $L$ to use. Use of the analytic results  means we require $L <K$ and we are interested in the singular case $K<p.$ To select $L$ we proceed by seeking $L={\hat L}$ that minimizes the predictive risk
defined as
$$
{\rm PR}(\ell)=E\left\{ E_{{\tilde{\bfJ}}} \{ \|{\hat{\bfC}}^\star_\ell
{\tilde{\bfJ}}{\tilde{\bfJ}}^H-\bfI_p\|^2_{\rm F} \big| \bfJ_0,\ldots,\bfJ_{K-1} \}\right\}
$$
where ${\hat{\bfC}}^\star_\ell$ is the estimated inverse spectral matrix found from $\bfJ_0,\ldots,\bfJ_{K-1}$ when $L=\ell,$ and ${\tilde{\bfJ}}$ is independent of the $\bfJ_k$'s and from the same distribution. Here we have used quadratic loss which does not involve any further matrix inversions. We approximate the predictive risk using leave-one-out cross-validation. Specifically,
the estimate of the predictive risk is
$$
{\widehat{\rm PR}}(\ell)={1\over K} \sum_{j=1}^K\| 
{\hat{\bfC}}^{\star[j]}_\ell\bfJ_j\bfJ_j^H-\bfI_p\|^2_{\rm F},
$$
where ${\hat{\bfC}}^{\star[j]}_\ell$ denotes the estimated inverse spectral matrix found from $\bfJ_0,\ldots,\bfJ_{K-1}$ excluding $\bfJ_j.$ Then we take
\begin{equation}\label{eq:chooseL}
{\hat{L}}=\arg\min_{\ell} {\widehat{\rm PR}}(\ell).
\end{equation}
Note that using this scheme it is only possible to consider values of $\ell <K-1$ since we know that ordinarily $L$  must be less than $K$ but additionally here ${\hat{\bfC}}^{[j]}_\ell$ is derived from $K-1$ of the $\bfJ_j$'s.

\subsection{Example}
In order to illustrate the random matrix  estimator ${\hat {\bfC}}^{\star}_L( {\hat {\bfS}})$ for $\bfC$ in a time series context, a stable and stationary vector autoregressive 
process of order 1 and dimension $p=10$ 
(VAR$_{10}(1)$) was utilized with $N=1000$ and $K=8.$ At each frequency (\ref{eq:chooseL}) was used to choose $L.$ 
Fig.~\ref{fig:SinverseMarzetta} shows the resulting (estimated) PRIAL 
\begin{equation}\label{eq:PRIAL_Marzetta}
{\rm PRIAL}\EqualDef 100
\frac{
E_{\bfS}\{ \|{\hat \bfC}_{\rm LW}-\bfC\|^2_{\rm F}\}-
E_{\bfS}\{ \|{\hat {\bfC}}^{\star}_L-\bfC\|^2_{\rm F}\}
}
{
E_{\bfS}\{ \|{\hat \bfC}_{\rm LW}-\bfC\|^2_{\rm F}\}
}.
\end{equation}
This estimated PRIAL was found from 100 replications and because of the need to produce the replications computations were carried out only at every 10th Fourier frequency.
The PRIAL reaches nearly  20\% for some frequencies again showing  a worthwhile improvement over the Ledoit-Wolf estimator.

%%%%%%%%%%%%%%%%%%%%%%%%%%%%%%%%%%%%%%%%%%%%%%%%
\begin{figure}[t]
\begin{center}
\includegraphics[height=1.75in,width=3in]{Sinverse_peq10_Marzetta.eps}
\end{center}
\caption{Estimated PRIAL (\%) (improvement of ${\hat {\bfC}}^{\star}_L$ over
${\hat \bfC}_{\rm LW}$) for a VAR$_{10}(1)$ time series example.
\label{fig:SinverseMarzetta}}
\end{figure} 
%%%%%%%%%%%%%%%%%%%%%%%%%%%%%%%%%%%%%%%%%%%%%%%

\section{Application to EEG data}\label{eq:EEGapp}
We now  compute ${\hat \bfC}_{\rm RB}$ and
${\hat {\bfC}}^{\star}_L$ for 
electroencephalogram (EEG) data, (resting conditions with eyes closed), for a patient
diagnosed with positive syndrome schizophrenia.
Interest was in the delta frequency range, $0.5< f \leq 4$Hz, see \cite{Medkouretal10}. EEG was recorded on the scalp  at $10$  sites, so $\{\bfX_t\}$ is a $p=10$ vector-valued process,
using a bandpass filter of 0.5--45Hz and sample interval of $\deltt=0.01$s. To remove the dominant and contaminating 10Hz alpha rhythm, which would otherwise cause severe spectral leakage, the data was low-pass filtered and resampled to a sample interval of $\deltt=0.05$s. After this downsampling $N=612$. 

Using this real data the spectral matrix $\bfS(f)$ was estimated as ${\bfS}_0(f),$ say, for $|f|\leq \fN$ , using $K=40$ tapers.
Using the vector-valued circulant embedding approach, 
\cite{ChandnaWalden13},
100 independent Gaussian $p$-vector-valued time series ($p=10$) were computed, each having  $\bfS_0(f),\, |f|\leq \fN,$ as its true spectral matrix. 
For each of these time series the singular matrix $\hbS(f)$  was computed using multitaper estimation with $K=8$ tapers for 100 frequencies equally spaced between 0.5 and 4Hz, and from these estimates
${\hat \bfC}_{\rm RB}$ and
${\hat {\bfC}}^{\star}_L$ were computed,
(with (\ref{eq:chooseL}) choosing $L$ for  ${\hat {\bfC}}^{\star}_L$).
The estimated PRIAL --- with $\bfC=\bfS_0^{-1}$ --- was then found over the 100 replications. In this way the simulation experiment mimicks the spectral properties of the EEG data while providing calibrated results, which are shown in Fig.~\ref{fig:EEGexample}. We see that both schemes improve on the LW method,
but that ${\hat {\bfC}}^{\star}_L$ does particularly well, with PRIAL reaching 50\%.

%%%%%%%%%%%%%%%%%%%%%%%%%%%%%%%%%%%%%%%%%%%%%%%%
\begin{figure}[t]
\begin{center}
\includegraphics[height=2in,width=3in]{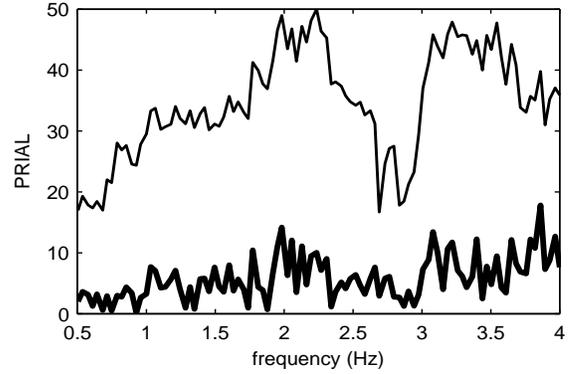}
\end{center}
\caption{Estimated PRIAL (\%) for EEG data. 
Improvement of ${\hat \bfC}_{\rm RB}$ over
${\hat \bfC}_{\rm LW}$ is shown by the thick line. Improvement of ${\hat {\bfC}}^{\star}_L$ over
${\hat \bfC}_{\rm LW}$ is shown by the thin line. 
\label{fig:EEGexample}}
\end{figure} 
%%%%%%%%%%%%%%%%%%%%%%%%%%%%%%%%%%%%%%%%%%%%%%%

\section{Concluding Discussion}
We have described two analytical estimators (Rao-Blackwell and random matrix)  for the spectral precision matrix.
Interestingly, ${\hat \bfC}_{\rm RB}$ is the inverse of a shrinkage estimator where the shrinkage parameter is obtained as a conditional expectation, conditional on $\hbS$, while the random matrix estimator 
${\hat {\bfC}}^{\star}_L$ is also a conditional expectation, again conditioned on $\hbS.$
We have shown that both 
hold promise for being useful in practice, offering possibly substantial improvements over the inverse of the LW estimator of $\bfC.$ Further investigation of their properties seems worthwhile. 

\appendix{}
To simplify notation we drop  explicit frequency dependence. 
\subsection{Bi-unitary invariance}\label{app:invariance}

\begin{definition}\label{Def:biunit}
A complex-valued  $n\times m$ random matrix $\bfZ$ is right(left)-unitarily invariant if its distribution is invariant under the transformation $\bfZ\rightarrow \bfZ\bfTheta$ 
($\bfZ\rightarrow \bfUpsilon\bfZ$)
where $\bfTheta\in {\cal U}(m), \bfUpsilon\in {\cal U}(n),$ where ${\cal U}(n)$ is the compact group of all $n\times n$ complex unitary matrices, i.e.,
${\cal U}(n)=\{ \bfU_{n\times n}: \bfU^H \bfU=\bfI_n\}.$ If both are true we say $\bfZ$ is bi-unitarily invariant.

\begin{lemma}\label{lemma:one}
The matrix $\bfJ$ defined in (\ref{eq:defJ}) with $\bfJ_k$ given by
(\ref{eq:eqnSHRee}) is right-unitarily invariant. (If $\bfS=\bfI_p$ it is bi-unitarily invariant.)�
\end{lemma}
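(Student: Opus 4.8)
The plan is to exploit the representation of $\bfJ$ as a fixed matrix times a matrix of i.i.d.\ standard complex Gaussians, and then to reduce right-multiplication by a unitary matrix to a norm-preserving (unitary) change of variables on the vectorized Gaussian. First I would write $\bfJ=\bfB\bfG$, where $\bfB$ is any fixed $p\times p$ matrix with $\bfB\bfB^H=\bfS$ (for instance the Hermitian positive semidefinite square root $\bfS^{1/2}$), and $\bfG=[\bfg_0,\ldots,\bfg_{K-1}]$ is the $p\times K$ matrix whose columns are i.i.d.\ ${\cal N}_p^C\{{\bf 0},\bfI_p\}$. This is legitimate because, under (\ref{eq:eqnSHRee}), each $\bfJ_k\EqualDist\bfB\bfg_k$ and the $\bfJ_k$ are independent. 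Since $\bfJ\bfTheta=\bfB(\bfG\bfTheta)$ for any $\bfTheta\in{\cal U}(K)$, it suffices to prove $\bfG\bfTheta\EqualDist\bfG$.

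Second, I would vectorize: $\vec(\bfG\bfTheta)=(\bfTheta^T\otimes\bfI_p)\vec(\bfG)$, while $\vec(\bfG)$ is a $pK$-vector of i.i.d.\ ${\cal N}^C\{0,1\}$ entries — a zero-mean \emph{proper} (circularly symmetric) complex Gaussian with covariance $\bfI_{pK}$ and vanishing pseudo-covariance. One checks that $\bfTheta^T\otimes\bfI_p$ is unitary (from $\bfTheta^H\bfTheta=\bfI_K$ one gets $\bfTheta^T\overline{\bfTheta}=\bfI_K$, whence $(\bfTheta^T\otimes\bfI_p)(\bfTheta^T\otimes\bfI_p)^H=\bfI_{pK}$), and a proper complex Gaussian vector with mean ${\bf 0}$ and covariance $\bfI_{pK}$ is invariant in law under any unitary map, since its density depends on the argument only through its Euclidean norm and a unitary map preserves mean zero, identity covariance, and zero relation matrix. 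Hence $\vec(\bfG\bfTheta)\EqualDist\vec(\bfG)$, i.e.\ $\bfG\bfTheta\EqualDist\bfG$, and applying $\bfB$ gives $\bfJ\bfTheta\EqualDist\bfJ$.

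For the parenthetical claim, set $\bfS=\bfI_p$, so $\bfJ\EqualDist\bfG$; left-multiplication by $\bfUpsilon\in{\cal U}(p)$ gives $\vec(\bfUpsilon\bfG)=(\bfI_K\otimes\bfUpsilon)\vec(\bfG)$, again a unitary transformation of the same standard complex Gaussian vector, so $\bfUpsilon\bfG\EqualDist\bfG$; combined with the right-invariance just established this is bi-unitary invariance. (When $\bfS\neq\bfI_p$ left-invariance fails in general, because the columns of $\bfUpsilon\bfJ$ have covariance $\bfUpsilon\bfS\bfUpsilon^H$, which need not equal $\bfS$.) The only step requiring genuine care is the Gaussian-invariance claim: one must use that the object is \emph{proper} complex Gaussian — so that a unitary map preserves the mean, the covariance, \emph{and} the pseudo-covariance — which is exactly where the circular-symmetry built into (\ref{eq:eqnSHRee}) is needed; the Kronecker-product manipulations and the $\bfB\bfG$ factorization are routine.
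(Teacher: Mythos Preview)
Your argument is correct. The factorization $\bfJ=\bfB\bfG$ with $\bfB\bfB^H=\bfS$ and $\bfG$ having i.i.d.\ standard proper complex Gaussian entries is valid under (\ref{eq:eqnSHRee}); the vectorization identity $\vec(\bfG\bfTheta)=(\bfTheta^T\otimes\bfI_p)\vec(\bfG)$ and the check that $\bfTheta^T\otimes\bfI_p$ is unitary are both right; and the key step---that a zero-mean proper complex Gaussian with identity covariance has a density depending only on $\|z\|^2$ and is therefore unitarily invariant---is exactly what is needed and is correctly identified as the place where circular symmetry enters. The left-invariance argument for $\bfS=\bfI_p$ and the remark on why it fails otherwise are also fine.

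The paper, by contrast, does not give a self-contained argument at all: it simply cites James (1964), where the bi-unitary invariance of a $p\times K$ standard complex Gaussian matrix is stated, and the right-invariance for general $\bfS$ follows because pre-multiplication by $\bfS^{1/2}$ commutes with right-multiplication by $\bfTheta$. So your route is genuinely more explicit and elementary: you reduce the matrix statement to a single unitary change of variables on a vectorized standard Gaussian, which makes transparent both why only right-invariance survives for general $\bfS$ and why properness is essential. The trade-off is that the citation disposes of the matter in one line, whereas your proof, while longer, requires no external reference and isolates the exact distributional hypothesis being used.
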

\begin{proof}
This follows from \cite[p.~487]{James64}.
\end{proof}

\end{definition}
\begin{lemma}\label{def:two}
When considered as a metric space ${\cal U}(n)$ is measurable. 
There is a unique left-unitarily invariant probability measure $\mu$ for ${\cal U}(n)$ such that $\mu(\bfTheta \bfA)=\mu(\bfA)$ for any measurable $\bfA \subset {\cal U}(n)$ and any $\bfTheta\in {\cal U}(n).$ Moreover, since ${\cal U}(n)$ is compact, the same measure $\mu$ is also right-unitarily invariant. The Haar measure  is this unique probability measure $\mu$  on ${\cal U}(n)$ that is bi-unitarily invariant.
See 
\cite[p.~108]{TracyWidom93}.
%\cite{Halmos50}. 
\end{lemma}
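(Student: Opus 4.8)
The plan is to recognize this as \emph{Haar's theorem} specialized to the compact matrix group ${\cal U}(n)$, together with the elementary fact that compact groups are unimodular. I would either import Haar's theorem directly (as the paper does, citing \cite{TracyWidom93}) or, for a self-contained argument, replace its nontrivial existence half by the explicit Gaussian construction given below.

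First I would set up the topological and measure-theoretic framework. The set ${\cal U}(n)=\{\bfU:\bfU^H\bfU=\bfI_n\}$ is the preimage of $\bfI_n$ under the continuous map $\bfU\mapsto\bfU^H\bfU$ on ${\mathbb C}^{n\times n}\cong{\mathbb R}^{2n^2}$, hence closed, and its rows are unit vectors, hence it is bounded; so ${\cal U}(n)$ is compact. Since matrix multiplication and $\bfU\mapsto\bfU^{-1}=\bfU^H$ are continuous, ${\cal U}(n)$ is a compact metrizable (hence separable) topological group, and ``measurable'' is read as: equipped with its Borel $\sigma$-algebra it is a measurable space on which every left or right translation is a measurable bijection. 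Then I would invoke Haar's theorem: there is a nonzero left-translation-invariant Borel measure on ${\cal U}(n)$, unique up to a positive scalar; compactness makes its total mass finite, so after normalizing there is a left-invariant Borel \emph{probability} measure $\mu$, i.e.\ $\mu(\bfTheta\bfA)=\mu(\bfA)$ for all Borel $\bfA\subset{\cal U}(n)$ and $\bfTheta\in{\cal U}(n)$, and the up-to-scalar uniqueness pins $\mu$ down uniquely among probability measures. For the ``moreover'' clause I would use that a compact group is unimodular: the modular function $\Delta:{\cal U}(n)\to(0,\infty)$, a continuous homomorphism measuring the failure of left Haar measure to be right-invariant, has compact image in $(0,\infty)$, which can only be $\{1\}$; hence $\Delta\equiv1$ and $\mu$ is right-invariant as well. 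Equivalently, letting $\nu$ be the image of $\mu$ under $\bfU\mapsto\bfU^{-1}$ (a right-invariant probability measure), one gets for every continuous $f$ the identity $\int f\,d\mu=\int\!\!\int f(\bfU\bfV)\,d\mu(\bfV)\,d\nu(\bfU)=\int f\,d\nu$ --- the first equality by left-invariance of $\mu$ (integrating in $\bfV$ first), the second by right-invariance of $\nu$ (integrating in $\bfU$ first) --- whence $\mu=\nu$ is bi-invariant, and the same identity applied to any left-invariant probability measure forces it to equal $\nu$, giving uniqueness. Being invariant under both $\bfZ\mapsto\bfTheta\bfZ$ and $\bfZ\mapsto\bfZ\bfUpsilon$ for all $\bfTheta,\bfUpsilon\in{\cal U}(n)$ is precisely bi-unitary invariance in the sense of Definition~\ref{Def:biunit}, which is the assertion that the Haar measure is bi-unitarily invariant.

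The main obstacle is the \emph{existence} half: Haar's abstract existence argument (a compactness/Banach--Alaoglu limit over covering ratios, or Weil's fixed-point argument) is the real content, and is the one ingredient I would rather cite than reprove. If a self-contained proof is preferred, the cleanest route is constructive: let $\bfZ\in{\mathbb C}^{n\times n}$ have i.i.d.\ standard complex Gaussian entries and take its QR factorization $\bfZ=\bfQ\bR$ with the convention that $\bR$ has real positive diagonal; the law of $\bfZ$ is invariant under $\bfZ\mapsto\bfTheta\bfZ$ and $\bfZ\mapsto\bfZ\bfUpsilon$ for $\bfTheta,\bfUpsilon\in{\cal U}(n)$, and the factorization is a.s.\ unique, so the law of $\bfQ$ inherits bi-unitary invariance and serves as $\mu$; uniqueness is then supplied by the Fubini identity above. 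Everything else --- closedness, boundedness, and continuity of the group operations --- is routine.
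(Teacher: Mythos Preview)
Your proposal is correct, but note that the paper does not actually prove this lemma: it simply states the result and refers the reader to \cite[p.~108]{TracyWidom93}. There is no proof environment attached to Lemma~\ref{def:two} in the paper; the citation is the entirety of the justification.

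What you have written therefore goes well beyond the paper's treatment. Your argument is the standard one --- ${\cal U}(n)$ is a compact metrizable topological group, Haar's theorem supplies a left-invariant probability measure unique up to normalization, and compactness forces the modular function to be identically $1$, giving bi-invariance --- and it is sound. The Fubini identity you use to show $\mu=\nu$ and to deduce uniqueness is a clean self-contained replacement for the modular-function language. The optional Gaussian/QR construction you offer for existence is also correct and is in fact closely aligned with the random-matrix spirit of the paper (cf.\ the bi-unitary invariance of $\bfJ$ in Lemma~\ref{lemma:one}), so it would sit naturally here if a constructive proof were wanted. In short: the paper cites; you prove. Both are acceptable for a background lemma of this kind, and your version would be a strict upgrade in self-containedness.
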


\begin{remark}
Let $\bfUpsilon\in {\cal U}(n).$ If $\bfUpsilon$ has Haar measure then for all $\bfTheta_1, \bfTheta_2 \in {\cal U}(n),$ $p(\bfTheta_1\bfUpsilon\bfTheta_2)=p(\bfUpsilon),$ where $p(\bfUpsilon)$ denotes the joint probability density function of the components of the unitary matrix.
\end{remark}

\begin{lemma}\label{Def:nonsquare}
Let $\bfUpsilon\in {\cal U}(n)$ equipped with Haar measure. We now consider two specific truncations of the $n\times n$ unitary matrices.
Suppose we partition $\bfUpsilon$ in two ways:
$$
\bfUpsilon =
\left[
\begin{array}{c}
\bfPhi \\ \hline
\bfP_{(n-m)\times n}
\end{array}
\right]=
\left[
\begin{array}{c|c}
\bfPhi_0 & \bfQ_{n \times (n-m)} \\
\end{array}
\right],
$$
where $\bfPhi$ is $m\times n, m<n$ and $\bfPhi_0$ is $n\times m, m< n.$
Then $\bfUpsilon \rightarrow \bfPhi$ maps the unitary group onto the Stiefel manifold of $m\times n$ matrices with orthonormal rows, $\bfPhi\bfPhi^H=\bfI_m.$ The image of the Haar measure under this map is bi-unitarily invariant. Likewise, $\bfUpsilon \rightarrow \bfPhi_0$ maps the unitary group onto the Stiefel manifold of $n\times m$ matrices with orthonormal columns, $\bfPhi_0^H\bfPhi_0=\bfI_m.$ The image of the Haar measure under this map is again bi-unitarily invariant. See \cite{Fyodorov07}.
\end{lemma}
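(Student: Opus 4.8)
The plan is to reduce both assertions to the bi-unitary invariance of the Haar measure on ${\cal U}(n)$ established in Lemma~\ref{def:two}, by observing that each truncation intertwines the relevant one-sided group actions. I would begin with the surjectivity and measurability statements. Any $m$ rows of $\bfUpsilon\in{\cal U}(n)$ are orthonormal, so $\bfPhi\bfPhi^H=\bfI_m$ automatically; conversely, a Gram--Schmidt completion shows that every $m\times n$ matrix with orthonormal rows arises as the first $m$ rows of some element of ${\cal U}(n)$, so $\bfUpsilon\mapsto\bfPhi$ maps onto the stated Stiefel manifold, and likewise $\bfUpsilon\mapsto\bfPhi_0$ maps onto the manifold of $n\times m$ matrices with orthonormal columns. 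Since each truncation is continuous it is Borel measurable, so the push-forward of the Haar measure is in each case a well-defined probability measure on the Stiefel manifold.

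For $\bfPhi$ (the first $m$ rows), right-invariance under $\bfPhi\mapsto\bfPhi\bfTheta$ with $\bfTheta\in{\cal U}(n)$ is immediate: the first $m$ rows of $\bfUpsilon\bfTheta$ equal $\bfPhi\bfTheta$, and $\bfUpsilon\bfTheta\EqualDist\bfUpsilon$ by right-invariance of the Haar measure, so passing to the first $m$ rows gives $\bfPhi\bfTheta\EqualDist\bfPhi$. For left-invariance under $\bfPhi\mapsto\bfV\bfPhi$ with $\bfV\in{\cal U}(m)$, I would embed $\bfV$ as the block matrix $\bfW={\rm diag}\{\bfV,\bfI_{n-m}\}\in{\cal U}(n)$; an elementary block computation gives that the first $m$ rows of $\bfW\bfUpsilon$ are precisely $\bfV\bfPhi$, and since $\bfW\bfUpsilon\EqualDist\bfUpsilon$ by left-invariance of the Haar measure, taking the first $m$ rows yields $\bfV\bfPhi\EqualDist\bfPhi$. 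Hence $\bfPhi$ is bi-unitarily invariant. The case of $\bfPhi_0$ (the first $m$ columns) is the mirror image: left-invariance under $\bfTheta\in{\cal U}(n)$ follows directly because the first $m$ columns of $\bfTheta\bfUpsilon$ are $\bfTheta\bfPhi_0$ and $\bfTheta\bfUpsilon\EqualDist\bfUpsilon$, while right-invariance under $\bfV\in{\cal U}(m)$ follows from the same block embedding applied on the right, $\bfUpsilon\mapsto\bfUpsilon\bfW$, whose first $m$ columns are $\bfPhi_0\bfV$, together with $\bfUpsilon\bfW\EqualDist\bfUpsilon$.

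I do not expect a genuine obstacle here; the only points requiring care are keeping straight which unitary group ($m\times m$ or $n\times n$) acts on which side of the non-square truncation, as prescribed by Definition~\ref{Def:biunit}, and verifying the elementary block-matrix identities that convert a one-sided multiplication on ${\cal U}(n)$ into the corresponding one-sided multiplication on the Stiefel manifold. Both are routine, and this is in essence the content of the cited reference~\cite{Fyodorov07}, reproduced here for completeness.
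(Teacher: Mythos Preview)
Your argument is correct and complete. The paper does not actually give a proof of this lemma: the statement ends with ``See \cite{Fyodorov07}'' and no further justification is offered. Your proposal supplies precisely the details that the paper outsources to the reference---surjectivity via Gram--Schmidt completion, right-invariance of $\bfPhi$ directly from right-invariance of the Haar measure on ${\cal U}(n)$, and left-invariance via the block embedding $\bfW={\rm diag}\{\bfV,\bfI_{n-m}\}\in{\cal U}(n)$---and the mirror argument for $\bfPhi_0$. This is the standard way to prove the result and is essentially what one finds in \cite{Fyodorov07}; there is nothing to compare against in the paper itself.
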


\subsection{Results required for proof of Theorem~\ref{theorem:three}}\label{app:th3}

%Our proof is of necessity generally very %different in form to the equivalent provided %in \cite{Chen_etal10} for the real-valued %case, since references used there are not %available in the complex-valued case. 

\begin{theorem}\label{theorem:main}\label{eq:proof}
We know that 
the singular value decomposition (SVD) for the $p \times K$ 
random matrix $\bfJ$ defined by (\ref{eq:defJ}) and (\ref{eq:eqnSHRee})  is \cite[p.~182]{Bernstein05}
$
\bfJ=\bfU\bfPsi\bfV^H,
$
where $\bfU\in {\cal U}(p), \bfV\in{\cal U}(K)$ and $\bfPsi$ is the $p\times K$ matrix
$$
\bfPsi=
\left[
\begin{array}{c|c}
\bfOmega & {\bf0}_{r \times (K-r)} \\ \hline
{\bf0}_{(p-r)\times r} & {\bf 0}_{(p-r)\times (K-r)}
\end{array}
\right],
$$
$\bfOmega$ is the diagonal matrix $\bfOmega={\rm diag}\{\omega_1,\ldots,\omega_r\},$ $\omega_i=\lambda_i^{1/2},$ the square root of the $i$th ordered eigenvalue $\lambda_i(\bfJ\bfJ^H)=\lambda_i(\bfJ^H\bfJ).$ Here $r={\rm rank}\{ \bfJ\}={\rm rank}\{\bfJ\bfJ^H\}={\rm rank} \{\bfJ^H\bfJ\}.$ Further $r=\min\{p,K\}$ with probability 1.  Then,
\begin{enumerate}
\item{}
$\{\bU, \bfOmega\}$ and $\bfV$ are statistically independent.
\item{}
$\bfV$ is a bi-unitarily invariant unitary matrix.
\end{enumerate}
\end{theorem}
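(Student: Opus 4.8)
The plan is to work directly with the complex Gaussian structure of $\bfJ$ and exploit the invariance properties established in Lemma~\ref{lemma:one}. Since each column $\bfJ_k\sim{\cal N}_p^C\{{\bf 0},\bfS\}$, we may write $\bfJ=\bfS^{1/2}\bfG$ where $\bfG\in{\mathbb C}^{p\times K}$ has i.i.d.\ standard complex Gaussian entries. The matrix $\bfG$ is bi-unitarily invariant: for any fixed $\bfUpsilon\in{\cal U}(p)$ and $\bfTheta\in{\cal U}(K)$, $\bfUpsilon\bfG\bfTheta\EqualDist\bfG$, since the complex Gaussian density depends only on $\tr\{\bfG^H\bfG\}$, which is unitarily invariant on both sides. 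First I would reduce the claim about $\bfJ$ to the corresponding claim about $\bfG$, noting that left-multiplication by the fixed matrix $\bfS^{1/2}$ affects $\bfU$ and $\bfOmega$ but leaves the right singular vectors $\bfV$ of $\bfJ$ equal (in distribution, jointly with the rest) to those of $\bfG$; more precisely $\bfJ^H\bfJ=\bfG^H\bfS\bfG$ shares its eigenvectors with... — careful here, that identity is false, so instead I would argue via the SVD $\bfG=\bfU_G\bfPsi_G\bfV_G^H$ and show the distributional statements are most cleanly proved for $\bfG$ and then transported.

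The core of the argument has two parts. For the independence claim, I would use the standard polar-decomposition / change-of-variables result for complex Gaussian matrices (see e.g.\ \cite{James64, Bernstein05}): writing $\bfG=\bfU_G\bfPsi_G\bfV_G^H$ with $r=\min\{p,K\}$, the Jacobian of the transformation factors so that the joint density of $(\bfU_G,\bfOmega_G,\bfV_G)$ is a product of a density in $(\bfU_G,\bfOmega_G)$ and the (constant) Haar density on the relevant Stiefel manifold for $\bfV_G$. This is precisely the matrix-variate analogue of the fact that for a real Gaussian vector the direction and magnitude are independent. For the bi-unitary invariance of $\bfV_G$: fix $\bfTheta\in{\cal U}(K)$. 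Then $\bfG\bfTheta\EqualDist\bfG$, and $\bfG\bfTheta=\bfU_G\bfPsi_G(\bfTheta^H\bfV_G)^H$ exhibits an SVD of $\bfG\bfTheta$ with right singular-vector matrix $\bfTheta^H\bfV_G$; by uniqueness of the SVD (up to the usual phase ambiguities, which are handled by the distinct-singular-values event occurring with probability one) and the distributional equality, $\bfTheta^H\bfV_G\EqualDist\bfV_G$, giving right-unitary invariance. Left-unitary invariance follows from $\bfUpsilon\bfG\EqualDist\bfG$ together with the fact that left-multiplication by $\bfUpsilon$ changes $\bfU_G$ to $\bfUpsilon\bfU_G$ but leaves $\bfV_G$ untouched, so $\bfV_G$'s distribution is trivially invariant under this map — hence by Lemma~\ref{def:two} its distribution is the Haar measure on ${\cal U}(K)$. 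I would then note that transporting back from $\bfG$ to $\bfJ=\bfS^{1/2}\bfG$ does not disturb either conclusion, since $\bfS^{1/2}$ acts only on the left and the invariance/independence statements concerned only $\bfV$ and its relationship to $\{\bfU,\bfOmega\}$ jointly.

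I expect the main obstacle to be the careful handling of the phase (diagonal unitary) ambiguity in the SVD. When $r<K$ (the singular case $p>K$ gives $r=K$, but in the generic singular-spectral-matrix regime $K<p$ so $r=K$; however one should also keep $r=p\le K$ in mind) the right singular vectors associated with zero singular values are only defined up to an arbitrary unitary block, and even the nonzero ones carry individual phases. The clean way around this is to work on the probability-one event that the $r$ nonzero singular values are distinct, adopt a measurable phase convention, and verify that the convention is equivariant under the unitary transformations in question, or alternatively to phrase everything in terms of the orthogonal projections $\bfV_G\bfV_G^H$ restricted to appropriate subspaces, which are genuinely well-defined. A secondary technical point is justifying the Jacobian factorization in the complex case with possibly rectangular $\bfG$; this is standard but must be cited correctly (the complex Stiefel-manifold measure decomposition), and the degenerate zero-block of $\bfPsi_G$ must be excised before applying it. Everything else — the invariance manipulations — is routine once the SVD bookkeeping is pinned down.
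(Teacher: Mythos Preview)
Your plan has a real gap in the ``transport'' step. You claim that passing from $\bfG$ to $\bfJ=\bfS^{1/2}\bfG$ ``does not disturb either conclusion, since $\bfS^{1/2}$ acts only on the left.'' That reasoning would be valid if $\bfS^{1/2}$ were unitary: then $\bfS^{1/2}\bfU_G$ would be unitary and $\bfJ=(\bfS^{1/2}\bfU_G)\bfPsi_G\bfV_G^H$ would already be an SVD of $\bfJ$ with the same right factor. But $\bfS^{1/2}$ is merely Hermitian positive definite, so $\bfS^{1/2}\bfU_G\bfPsi_G$ must itself be re-decomposed, and the resulting $\bfV_J$ is not $\bfV_G$. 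Concretely, $\bfJ^H\bfJ=\bfG^H\bfS\bfG$ and $\bfG^H\bfG$ do not share eigenvectors --- the very point you flagged mid-sentence and then left unresolved.

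The fix is simpler than the detour. The density of $\bfJ$ itself is proportional to $\exp\{-\tr(\bfS^{-1}\bfJ\bfJ^H)\}$, and under the SVD $\bfJ\bfJ^H=\bfU_0\bfOmega^2\bfU_0^H$ depends only on $(\bfU_0,\bfOmega)$, never on $\bfV$. So the Jacobian factorization you propose for $\bfG$ works verbatim for $\bfJ$: the same volume-element computation (e.g.\ \cite{Ratnarajah05}) gives $(\dif\bfJ)$ proportional to a function of $(\bfU_0,\bfOmega)$ times $(\bfV^H\dif\bfV)$, and the product with the density factors cleanly. This is precisely how the paper proceeds, directly for $\bfJ$, with no intermediate $\bfG$. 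If you insist on the detour it can be repaired --- one checks that $\bfV_J=\bfV_G\bfM$ for a unitary $\bfM$ depending only on $(\bfU_G,\bfOmega_G)$, and then uses right-invariance of Haar measure together with the already-established independence of $\bfV_G$ from $(\bfU_G,\bfOmega_G)$ --- but this is strictly more work than applying the Jacobian argument to $\bfJ$ in the first place. Separately, your paragraph deriving ``left-unitary invariance'' of $\bfV_G$ from $\bfUpsilon\bfG\EqualDist\bfG$ is confused: that transformation leaves $\bfV_G$ literally unchanged and so says nothing about its distribution under left multiplication. What you actually need is only the one-sided invariance $\bfTheta^H\bfV\EqualDist\bfV$ (which you do obtain, once independence of $\bfV$ and $\bfOmega$ is in hand); Lemma~\ref{def:two} then supplies the other side. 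The paper's argument for part~2 is exactly this.
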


\begin{proof}
1. We firstly show that $\{\bU, \bfOmega\}$ and $\bfV$ are statistically independent. 

Let $\bfU=[\bfU_0,\bfu_{r+1},\ldots,\bfu_p]=[\bfU_0 \,| \, \bfU_1]$ and let
$\bfV=[\bfV_0,\bfv_{r+1},\ldots,\bfv_K]=[\bfV_0 \,| \, \bfV_1].$
The full SVD $\bfJ=\bfU\bfPsi\bfV^H$ can be written in the form
$$
\bfJ=[\bfU_0 \,| \,  \bfU_1]\bfPsi \left[ \begin{array}{c} \bfV_0^H \\ \hline {\bfV}_1^H  \end{array} \right].
$$
Now consider two cases
\begin{itemize}
 \item{$K\leq p$}.
In this case, $r=K$ and 
\begin{equation}
\bfJ=[\bfU_0 \,| \,  \bfU_1]
\left[ \begin{array}{c} \bfOmega \\ \hline {\bf0}_{(p-K)\times K}  \end{array} \right]\bfV^H.
\label{eq:Kltp}
\end{equation}
\item{$K>p$}
In this case, $r=p$ and
\begin{equation}
\bfJ=\bfU
\left[ \begin{array}{c|c} \bfOmega & {\bf0}_{p\times (K-p)}  \end{array} \right]\left[ \begin{array}{c} \bfV_0^H \\ \hline {\bfV}_1^H  \end{array} \right].\label{eq:Kgtp}
\end{equation}
\end{itemize}

Write $\bfJ=\bfA+\eye \bfB.$ The probability density is given  by \cite[eqn.~78]{James64}
\begin{equation}\label{eq:pdfJ}
\pi^{-pK}|\bfS|^{-K}\exp^{-\tr\{\bfS^{-1}\bfJ\bfJ^H\}}{\textstyle{\prod}}_{i=1}^p{\textstyle{\prod}}_{j=1}^K \dif A_{ij} \dif B_{ij}.
\end{equation}
$\dif A_{ij}$ is the $i,j$-th element of $\dif\bfA$ and 
${\textstyle{\prod}}_{i=1}^p{\textstyle{\prod}}_{j=1}^K \dif A_{ij} \dif B_{ij}
$ is the volume element. Since we are interested in transforming $\bfJ$ it is convenient to use another notation for the volume element, viz  $(\dif \bfJ),$ 
so that (\ref{eq:pdfJ}) becomes
\begin{equation}\label{eq:pdftwo}
\pi^{-pK}|\bfS|^{-K}\exp^{-\tr\{\bfS^{-1}\bfJ\bfJ^H\}} (\dif \bfJ)
\end{equation}
which relates the volume element to the exterior product notation:
$$
(\dif \bfJ)\EqualDef (\dif\bfA)(\dif \bfB).
$$
where
$
(\dif \bfA)=\wedge_{j=1}^K \wedge_{i=1}^p \dif A_{ij};
$
see \cite[Chapter~2]{Muirhead82}.
Now we return to the case of $K\leq p$ and consider the `thin' SVD corresponding to (\ref{eq:Kltp}). It takes the form
\begin{equation}
\bfJ=\bfU_0 \bfOmega
\bfV^H.
\label{eq:Kltpthin}
\end{equation}

The transformation $\bfJ \rightarrow \bfU_0\bfOmega\bfV^H$  was studied in \cite{Ratnarajah05} who found the volume element  $(\dif\bfJ)$ to be proportional to
\begin{equation}\label{eq:volel}
 [\det\{\bfOmega\}]^{2p-2K+1}{\textstyle{\prod}}_{k<l}^K (\omega^2_k-\omega_l^2)^2 (\bfOmega)(\bfU_0\dif \bfU_0)(\bfV\dif \bfV^H).
\end{equation}
In (\ref{eq:pdftwo}), $\pi^{-pK}|\bfS|^{-K}\exp^{-\tr\{\bfS^{-1}\bfJ\bfJ^H\}}$
becomes
\begin{equation}\label{eq:pdfthree}
\pi^{-pK}|\bfS|^{-K}\exp^{-\tr\{\bfS^{-1}\bfU_0\bfOmega^2\bfU_0^H\}}.
\end{equation}
The product of (\ref{eq:pdfthree})  and the volume element (\ref{eq:volel}) shows that the probability density can be factored into functions of $\{ \bfU_0, \bfOmega\}$ and $\bfV.$ Now $\bfU=[\bfU_0 \,| \, \bfU_1]$ and in order for $\bfU$ to be unitary,
$\bfU_1$ depends totally on $\bfU_0.$
Hence $\bfV$ is independent of $\bfU$ and $\bfOmega.$

For the case $K>p$ consider the `thin' SVD corresponding to (\ref{eq:Kgtp}),
i.e., $\bfJ=\bfU \bfOmega
\bfV_0^H.$ Then the probability density can be factored into functions of $\{ \bfU, \bfOmega\}$ and $\bfV_0.$ Now $\bfV=[\bfV_0 \,| \, \bfV_1]$ and in order for $\bfV$ to be unitary,
$\bfV_1$ depends totally on $\bfV_0.$
Hence $\bfV$ is again independent of $\bfU$ and $\bfOmega.$
\end{proof}

2. We now show that the unitary matrix $\bfV$ is bi-unitarily invariant.

\begin{proof}
Note that
$
\bfJ^H\bfJ=\bfV\bfPsi^2\bfV^H=\bfV\bfLambda_K\bfV^H,
$ 
with
$$
\bfLambda_K=
\left[
\begin{array}{c|c}
\begin{matrix}
\lambda_1 & & \\
& \ddots & \\
&& \lambda_r
\end{matrix}
 & {\bf0}_{r \times (K-r)} \\ \hline
{\bf0}_{(K-r)\times r} & {\bf 0}_{(K-r)\times (K-r)}
\end{array}
\right].
$$
Since $\bfJ$ is  right-unitarily invariant (Lemma~\ref{lemma:one}) we know that $\bfJ$ and $\bfJ \bfTheta^H$ have the same distribution for $\bfTheta^H\in {\cal U}(K).$ Hence, with
${\displaystyle{\EqualDist}}$ denoting ``equal in distribution,''
$$\bfJ^H\bfJ\,{\displaystyle\EqualDist}\, (\bfJ\bfTheta^H)^H(\bfJ\bfTheta^H)= \bfTheta\bfJ^H\bfJ\bfTheta^H
=(\bfTheta\bfV)\bfLambda_K(\bfTheta\bfV)^H
$$
and so
$
\bfV\bfLambda_K\bfV^H \,{\displaystyle\EqualDist}\, (\bfTheta\bfV)\bfLambda_K(\bfTheta\bfV)^H.
$
The random components of $\bfLambda_K$  are functions of the random components of $\bfOmega,$ and  $\bfV$ is independent of $\bfU$ and $\bfOmega,$ 
so $\bfV$ and $\bfLambda_K$ are independent. Then,
$
\bfV \,{\displaystyle\EqualDist}\, \bfTheta\bfV.
$
Since the distribution of $\bfV$ is left-unitarily invariant and $\bfV\in {\cal U}(K),$ we know from Lemma~\ref{def:two} of
Appendix\ref{app:invariance} that it is also right-unitarily invariant, and hence is a bi-unitarily invariant unitary matrix.
This completes the proof.\end{proof}

\begin{lemma}\label{lemma:two}
With the $K\times K$ matrix $\bfV$ defined as in Theorem~\ref{theorem:main}, let $v_{jk}=(\bfV)_{jk}.$ Then for $1\leq j,k,l \leq K, j\not=l,$
\begin{eqnarray}
E\{ |v_{kj}|^4\}&=&2/[K(K+1)]\label{eq:Haione}\\
E\{ |v_{kj}|^2 \cdot|v_{kl}|^2\}&=& 1/[K(K+1)].\label{eq:Haitwo}
\end{eqnarray}
\end{lemma}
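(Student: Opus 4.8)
The plan is to identify the distribution of a single row of $\bfV$ and then to read off the two fourth moments from it. Fix the row index $k$. Since $\bfV$ is bi-unitarily invariant (Theorem~\ref{theorem:main}), its distribution is in particular left-unitarily invariant, so by the uniqueness statement in Lemma~\ref{def:two} of Appendix~\ref{app:invariance} it is the Haar distribution on ${\cal U}(K)$; consequently $\bfV^H$ is Haar as well. Let $\bfv$ be the $k$th column of $\bfV^H$, so that the $j$th entry of $\bfv$ is $\overline{v_{kj}}$ and hence $|(\bfv)_j|=|v_{kj}|$, with $\|\bfv\|=1$ almost surely. For any $\bfUpsilon\in{\cal U}(K)$, left-invariance of $\bfV^H$ gives $\bfUpsilon\bfV^H\EqualDist\bfV^H$, and since the $k$th column of $\bfUpsilon\bfV^H$ is $\bfUpsilon\bfv$, we get $\bfv\EqualDist\bfUpsilon\bfv$. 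Because ${\cal U}(K)$ acts transitively on the unit sphere of ${\mathbb C}^K$ and that sphere carries a unique ${\cal U}(K)$-invariant probability measure (Lemma~\ref{def:two}; this is also the $m=1$ case of Lemma~\ref{Def:nonsquare}), $\bfv$ is uniformly distributed on $\{{\boldsymbol z}\in{\mathbb C}^K:\|{\boldsymbol z}\|=1\}$. In particular the joint law of $\bigl(|v_{k1}|^2,\ldots,|v_{kK}|^2\bigr)$ does not depend on $k$ and is exchangeable.

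Next I would realize this uniform law by Gaussians. If ${\boldsymbol z}=(z_1,\ldots,z_K)^T\EqualDist{\cal N}_K^C\{{\bf 0},\bfI_K\}$, then ${\boldsymbol z}/\|{\boldsymbol z}\|$ is uniform on the unit sphere, so with $Y_j=|z_j|^2$ and $S=\sum_{i=1}^K Y_i$ we have $|v_{kj}|^2\EqualDist Y_j/S$. The $Y_j$ are i.i.d.\ standard exponential (i.e.\ $\mathrm{Gamma}(1,1)$), so $(Y_1/S,\ldots,Y_K/S)$ is $\mathrm{Dirichlet}(1,\ldots,1)$, and in particular each $Y_j/S$ is $\mathrm{Beta}(1,K-1)$. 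The standard second moment of a $\mathrm{Beta}(a,b)$ variable, $a(a+1)/[(a+b)(a+b+1)]$, then gives
\begin{equation*}
E\{|v_{kj}|^4\}=E\bigl[(Y_j/S)^2\bigr]=\frac{1\cdot 2}{K(K+1)}=\frac{2}{K(K+1)},
\end{equation*}
which is (\ref{eq:Haione}).

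For (\ref{eq:Haitwo}) I would avoid a second integral and instead exploit the identity $\sum_{j=1}^K|v_{kj}|^2=1$. Squaring and taking expectations, and separating diagonal from off-diagonal terms,
\begin{equation*}
1=\sum_{j=1}^K E\{|v_{kj}|^4\}+\sum_{\substack{1\le j,l\le K\\ j\ne l}}E\{|v_{kj}|^2|v_{kl}|^2\}
=\frac{2}{K+1}+K(K-1)\,E\{|v_{kj}|^2|v_{kl}|^2\},
\end{equation*}
using that the $K$ diagonal terms are each $2/[K(K+1)]$ and the $K(K-1)$ off-diagonal terms are all equal by exchangeability. Solving gives $E\{|v_{kj}|^2|v_{kl}|^2\}=1/[K(K+1)]$ for $j\ne l$, which is (\ref{eq:Haitwo}). (Equivalently one could quote the Dirichlet cross-moment $E[(Y_j/S)(Y_l/S)]=1/[K(K+1)]$ directly.)

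The main obstacle is the first step rather than the arithmetic: one must be careful about which of the two invariances of $\bfV$ controls a \emph{row} as opposed to a column, and must invoke uniqueness of the ${\cal U}(K)$-invariant probability measure both to conclude that $\bfV$ is Haar and to identify the row distribution as uniform on the complex sphere. After that, the result is just the elementary moments of the $\mathrm{Beta}(1,K-1)$/$\mathrm{Dirichlet}(1,\ldots,1)$ law, with the cross moment falling out for free from the unit-norm constraint.
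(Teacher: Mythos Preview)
Your proof is correct. The paper takes a much shorter route: it simply observes that the bi-unitary invariance of $\bfV$ established in Theorem~\ref{theorem:main} is exactly the hypothesis needed to invoke Proposition~1.2 of Hiai and Petz \cite{HiaiPetz00}, which contains (\ref{eq:Haione}) and (\ref{eq:Haitwo}) as special cases, and stops there. Your argument is genuinely different in that it is self-contained: you first deduce from left-invariance and the uniqueness statement of Lemma~\ref{def:two} that $\bfV$ is Haar, then identify the distribution of a row of $\bfV$ as uniform on the complex unit sphere, realize it via a normalized complex Gaussian, recognize the vector of squared moduli as ${\rm Dirichlet}(1,\ldots,1)$, and finally read off the ${\rm Beta}(1,K-1)$ second moment for (\ref{eq:Haione}) and recover (\ref{eq:Haitwo}) from the unit-norm identity $\sum_j|v_{kj}|^2=1$. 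The paper's approach is terse and defers the computation to an external reference; yours is more elementary and transparent, and would be preferable in a setting where one does not wish to take the Hiai--Petz moment formulas for granted.
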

\begin{proof}
%Since $\bfV$ is bi-unitarily invariant, so is $\bfV^H.$ 
The bi-unitarily invariant nature of the unitary matrix $\bfV$ is sufficient \cite[p.~812]{HiaiPetz00}
for  the stated moment results of  \cite[Proposition~1.2]{HiaiPetz00} to hold, in particular (\ref{eq:Haione}) and (\ref{eq:Haitwo}).\end{proof}

\begin{lemma}\label{lemma:three}
We can write
 $$
E\left\{   \sum_{k=0}^{K-1}|| {\hat{\bfS}}_k-{\hat{\bfS}} ||^2_{\rm F}\,\, | \hbS\right\} =  \sum_{k=0}^{K-1} E\{ ||\bfJ_k||^4_2 | \hbS\}-K\tr\{
{\hat{\bfS}}^2 \}.
$$
\end{lemma}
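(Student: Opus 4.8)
The plan is to reduce the identity to an elementary expansion of the Frobenius norm combined with the defining relation $\hbS=\frac1K\sum_{k=0}^{K-1}\hbS_k$, after which the only probabilistic step is pulling an $\hbS$-measurable term out of the conditional expectation. Note that the algebraic part below is a deterministic identity in $\bfJ_0,\ldots,\bfJ_{K-1}$; no distributional assumption is needed until the very end, and even there only measurability of $\tr\{\hbS^2\}$ with respect to $\hbS$ is used.

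First I would expand each summand. Since $\hbS_k=\bfJ_k\bfJ_k^H$ and $\hbS$ are both Hermitian, $(\hbS_k-\hbS)^H=\hbS_k-\hbS$, so
$$
\| \hbS_k - \hbS \|_{\rm F}^2 = \tr\{\hbS_k^2\} - 2\tr\{\hbS_k\hbS\} + \tr\{\hbS^2\}.
$$
Summing over $k=0,\ldots,K-1$ and using $\sum_{k=0}^{K-1}\hbS_k=K\hbS$ to collapse the cross term, namely $\sum_{k}\tr\{\hbS_k\hbS\}=\tr\{(\sum_k\hbS_k)\hbS\}=K\tr\{\hbS^2\}$, the $\tr\{\hbS^2\}$ contributions combine to leave a net $-K\tr\{\hbS^2\}$:
$$
\sum_{k=0}^{K-1}\| \hbS_k-\hbS \|_{\rm F}^2 = \sum_{k=0}^{K-1}\tr\{\hbS_k^2\} - K\tr\{\hbS^2\}.
$$
Next I would identify $\tr\{\hbS_k^2\}$ with $\|\bfJ_k\|_2^4$: because $\hbS_k=\bfJ_k\bfJ_k^H$ is rank one, $\tr\{\hbS_k^2\}=\tr\{\bfJ_k\bfJ_k^H\bfJ_k\bfJ_k^H\}=(\bfJ_k^H\bfJ_k)^2=\|\bfJ_k\|_2^4$, whence
$$
\sum_{k=0}^{K-1}\| \hbS_k-\hbS \|_{\rm F}^2 = \sum_{k=0}^{K-1}\|\bfJ_k\|_2^4 - K\tr\{\hbS^2\}.
$$

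Finally I would take the conditional expectation given $\hbS$ on both sides. The term $K\tr\{\hbS^2\}$ is a deterministic function of $\hbS$, so it equals its own conditional expectation and passes through unchanged; linearity of conditional expectation distributes over the sum, producing exactly the claimed identity. I do not anticipate a real obstacle here: the only points needing care are the Hermitian symmetry used to drop the conjugate transpose in the expansion, and the bookkeeping of the coefficient on $\tr\{\hbS^2\}$ once the cross term is collapsed via $\sum_k\hbS_k=K\hbS$. Everything else is routine, and the lemma then feeds directly into the evaluation of the numerator of (\ref{eq:formRB}) once $E\{\|\bfJ_k\|_2^4\,|\,\hbS\}$ is computed in Lemma~\ref{lemma:four}.
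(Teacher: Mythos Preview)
Your proposal is correct and follows essentially the same approach as the paper: expand the Frobenius norm, collapse the cross terms via $\sum_k\hbS_k=K\hbS$, identify $\tr\{\hbS_k^2\}=\|\bfJ_k\|_2^4$, and pull the $\hbS$-measurable term $K\tr\{\hbS^2\}$ through the conditional expectation. The only cosmetic difference is that you carry out the algebra deterministically before conditioning, whereas the paper conditions first and then evaluates each of the four resulting terms; the content is the same.
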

\begin{proof}
Expanding the expectation on the left we get
\begin{eqnarray*}
 \sum_{k=0}^{K-1} E\{ \tr\{\bfJ_k\bfJ_k^H\bfJ_k\bfJ_k^H\}|\hbS\}-
 \sum_{k=0}^{K-1} E\{ \tr\{\hbS\bfJ_k\bfJ_k^H\}|\hbS\}\\
-\sum_{k=0}^{K-1} E\{ \tr\{\bfJ_k\bfJ_k^H\hbS\}|\hbS\}
+\sum_{k=0}^{K-1} E\{ \tr\{\hbS^2\}|\hbS\}
\end{eqnarray*}
Now,
$$
\tr\{\bfJ_k\bfJ_k^H\bfJ_k\bfJ_k^H\}=\tr\{\bfJ_k^H\bfJ_k\bfJ_k^H\bfJ_k\}=(\bfJ_k^H\bfJ_k)^2=||\bfJ_k||_2^4,
$$
so the first term is simply $ \sum_{k=0}^{K-1} E\{ ||\bfJ_k||^4_2 | \hbS\}.$
For the second term in the expansion we get 
$$
- E\{ \tr\{\hbS\sum_k\bfJ_k\bfJ_k^H\}|\hbS\}=- E\{ \tr\{ K\hbS^2\}|\hbS\}=-K\tr\{ \hbS^2\}.
$$
Terms three and four follow likewise to give the result.
\end{proof}

\begin{lemma}\label{lemma:four}
$$
 E\{ ||\bfJ_k||^4_2 | \hbS\}=\frac{K}{K+1}\left[\tr\{
{\hat{\bfS}}^2 \}+ \tr^2\{\hbS\}\right].
$$
\end{lemma}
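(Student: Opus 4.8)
The plan is to exploit the SVD $\bfJ = \bfU\bfPsi\bfV^H$ from Theorem~\ref{theorem:main} to express everything in terms of a single column of the bi-unitarily invariant matrix $\bfV$, and then apply the fourth-moment formulas of Lemma~\ref{lemma:two}. The key observation is that conditioning on $\hbS = \bfJ\bfJ^H/K$ is equivalent to conditioning on $\{\bfU,\bfOmega\}$ (equivalently, on $\{\bfU,\bfLambda\}$ where $\bfLambda = \bfOmega^2$), since $\hbS$ and the pair $\{\bfU,\bfOmega\}$ determine each other through the spectral decomposition $\hbS = \bfU(\bfOmega^2/K)\bfU^H$. By part~1 of Theorem~\ref{theorem:main}, $\bfV$ is independent of $\{\bfU,\bfOmega\}$, so the conditional expectation over $\bfJ$ given $\hbS$ reduces to an unconditional expectation over $\bfV$ with $\bfU,\bfOmega$ held fixed.

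Concretely, I would first write the $k$th column of $\bfJ$ as $\bfJ_k = \bfU\bfPsi\bfv^{(k)}$ where $\bfv^{(k)}$ is the $k$th row of $\bfV^H$, i.e.\ the (conjugate of the) $k$th column of $\bfV$. Since $\bfU$ is unitary, $\|\bfJ_k\|_2^2 = \|\bfPsi\bfv^{(k)}\|_2^2 = \sum_{i=1}^r \lambda_i |v_{ki}|^2$ (using $\omega_i^2=\lambda_i$ and absorbing the complex conjugate, which does not affect the modulus). Hence
$$
\|\bfJ_k\|_2^4 = \Big(\sum_{i=1}^r \lambda_i |v_{ki}|^2\Big)^2
= \sum_{i=1}^r \lambda_i^2 |v_{ki}|^4 + \sum_{i\neq j} \lambda_i\lambda_j |v_{ki}|^2|v_{kj}|^2 .
$$
Taking the conditional expectation given $\hbS$ (equivalently, fixing $\bfU,\bfOmega$ and averaging over the Haar-distributed $\bfV$) and substituting $E\{|v_{ki}|^4\} = 2/[K(K+1)]$ and $E\{|v_{ki}|^2|v_{kj}|^2\} = 1/[K(K+1)]$ from Lemma~\ref{lemma:two} gives
$$
E\{\|\bfJ_k\|_2^4\,|\,\hbS\} = \frac{2}{K(K+1)}\sum_{i=1}^r \lambda_i^2 + \frac{1}{K(K+1)}\sum_{i\neq j}\lambda_i\lambda_j .
$$
Now $\sum_{i=1}^r\lambda_i^2 = \tr\{(K\hbS)^2\}/\!\cdots$ — more carefully, since $\hbS = \bfU(\bfLambda/K)\bfU^H$ we have $\tr\{\hbS^2\} = (1/K^2)\sum_i\lambda_i^2$ and $\tr^2\{\hbS\} = (1/K^2)(\sum_i\lambda_i)^2 = (1/K^2)[\sum_i\lambda_i^2 + \sum_{i\neq j}\lambda_i\lambda_j]$. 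Rewriting $\sum_i\lambda_i^2 = K^2\tr\{\hbS^2\}$ and $\sum_{i\neq j}\lambda_i\lambda_j = K^2\tr^2\{\hbS\} - K^2\tr\{\hbS^2\}$, the expression collapses to
$$
E\{\|\bfJ_k\|_2^4\,|\,\hbS\}
= \frac{1}{K(K+1)}\big[2K^2\tr\{\hbS^2\} + K^2\tr^2\{\hbS\} - K^2\tr\{\hbS^2\}\big]
= \frac{K}{K+1}\big[\tr\{\hbS^2\} + \tr^2\{\hbS\}\big],
$$
as claimed.

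The main obstacle, and the place to be careful, is justifying that conditioning on $\hbS$ is exactly conditioning on $\{\bfU,\bfOmega\}$ together with the independence of $\bfV$ from that pair — this is precisely what Theorem~\ref{theorem:main} supplies, but one must note that the result does not depend on $k$, since every column of $\bfV$ has the same marginal distribution under bi-unitary invariance (so the conditional expectation is identical for all $k=0,\dots,K-1$, consistent with its use in Lemma~\ref{lemma:three}). A secondary point requiring a line of care is the bookkeeping with $r = \min\{p,K\}$: in the singular regime $K<p$ of primary interest $r=K$ and all $K$ of the $v_{ki}$, $i=1,\dots,K$, appear, whereas if $K\geq p$ then $r=p<K$ and only $p$ terms appear — but in both cases the sums $\sum_i\lambda_i^2$ and $\sum_i\lambda_i$ run over exactly the nonzero eigenvalues and the trace identities above hold verbatim, so the final formula is uniform in the two regimes.
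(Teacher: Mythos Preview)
Your proposal is correct and follows essentially the same route as the paper: express $\bfJ_k$ via the SVD of Theorem~\ref{theorem:main}, reduce $\|\bfJ_k\|_2^2$ to $\sum_i \lambda_i |v_{ki}|^2$, invoke the independence of $\bfV$ from $\{\bfU,\bfOmega\}$ to turn the conditional expectation into an unconditional one over the Haar-distributed $\bfV$, apply the fourth-moment identities of Lemma~\ref{lemma:two}, and convert back via $\sum_i\lambda_i^m = K^2\tr\{\hbS^m\}$. The paper phrases the conditioning step as an iterated expectation $E\{E\{\cdot\mid\hbS,\bfLambda_K\}\mid\hbS\}$ rather than your direct identification of $\sigma(\hbS)$ with $\sigma(\bfU,\bfOmega)$, but the substance is identical. (One cosmetic slip: your $\bfv^{(k)}$ should be the $k$th \emph{column} of $\bfV^H$, not the $k$th row; as you note, the modulus makes this immaterial to the computation.)
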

\begin{proof}
We adopt the approach of \cite[Lemma~3]{Chen_etal10}, although details and the result are different.
Now 
\begin{equation}\label{eq:split}
K\hbS=\bfJ\bfJ^H=\bfU\bfPsi{\bfPsi}^H\bfU^H=\bfU\bfLambda_p\bfU^H,
\end{equation}
where, with $\lambda_i \in {\mathbb{R}},$
$$
\bfLambda_p=
\left[
\begin{array}{c|c}
\begin{matrix}
\lambda_1 & & \\
& \ddots & \\
&& \lambda_r
\end{matrix}
 & {\bf0}_{r \times (p-r)} \\ \hline
{\bf0}_{(p-r)\times r} & {\bf 0}_{(p-r)\times (p-r)}
\end{array}
\right].
$$
Let $\bfV^H=[\bfnu_0,\ldots,\bfnu_{K-1}]$ so that $\bfJ_k=
\bfU\bfPsi\bfnu_k$ and
$$
\bfJ_k^H\bfJ_k=\bfnu_k^H\bfPsi^H\bfPsi\bfnu_k=\bfnu_k^H\bfLambda_K \bfnu_k.
$$ 

Consequently,
\begin{eqnarray}
 \!\!\!\!\!\!\!E\{ ||\bfJ_k||^4_2 | \hbS\} &=&
E\{ (\bfnu_k^H\bfLambda_K \bfnu_k)^2|\hbS\}\nonumber\\
&=& 
E\{ E\{ (\bfnu_k^H\bfLambda_K \bfnu_k)^2|\hbS,\bfLambda_K\}|\hbS\}.
\label{eq:condind}
\end{eqnarray}
\begin{itemize}
\item$\hbS$ depends on $\bfU$ and $\bfLambda_p$ and the random components of $\bfLambda_p$  are functions of the random components of $\bfOmega.$
\item
The random components of $\bfLambda_K$  are functions of the random components of $\bfOmega.$
\item
 $\bfnu_k$ is a function of $\bfV$.
\end{itemize}
Now $\bfV$ is independent of $\bfU$ and $\bfOmega$ by Theorem~\ref{theorem:main}. 
Therefore, for the inner conditional expectation of (\ref{eq:condind}) we know that $E\{ (\bfnu_k^H\bfLambda_K \bfnu_k)^2|\hbS,\bfLambda_K\}$ is given by
\begin{eqnarray*}
&&\sum_{j=1}^r\lambda_j^2 E\{|\nu_{jk}|^4\} + \sum_{j\not=l}^r \lambda_j\lambda_l
E\{ |\nu_{jk}|^2|\nu_{lk}|^2\}\\
&&\qquad=
\sum_{j=1}^r\lambda_j^2 E\{|v_{kj}|^4\} + \sum_{j\not=l}^r \lambda_j\lambda_l
E\{ |v_{kj}|^2|v_{kl}|^2\}
\end{eqnarray*}
where $v_{kl}=(\bfV)_{kl}.$
Then using (\ref{eq:Haione}) and (\ref{eq:Haitwo}), we see that
\begin{eqnarray*}
E\{ (\bfnu_k^H\bfLambda_K \bfnu_k)^2|\hbS,\bfLambda_K\}
\!\!\!\!\!\!\!\!\!&&={{\frac{1}{K(K+1)}}}
\!\left[2\sum_{j=1}^r\lambda_j^2+\sum_{j\not=l}^r \lambda_j\lambda_l\right]\\
\!\!\!\!\!\!\!\!\!&&=\frac{1}{K(K+1)}
\left[\sum_{j=1}^r\lambda_j^2+\sum_{j,l}^r \lambda_j\lambda_l\right]\\
\!\!\!\!\!\!\!\!\!&&=\frac{1}{K(K+1)}\left[\tr\{\bfLambda_p^2\}+\tr^2\{\bfLambda_p\}\right]\\
\!\!\!\!\!\!\!\!\!&&=\frac{K}{K+1}\left[\tr\{\hbS^2\}+\tr^2\{\hbS\}\right],
\end{eqnarray*}
since from (\ref{eq:split}) we have that 
$$
\tr\{\bfLambda_p^2\}=K^2 \tr\{ \hbS^2\}\quad\mbox{and}\quad 
\tr^2\{\bfLambda_p\}=K^2 \tr^2\{ \hbS\}.
$$
Taking the outer expectation  conditional on $\hbS$ changes nothing,
which completes the proof.
\end{proof}

% use section* for acknowledgement

\section*{Acknowledgment}
The work of Deborah Schneider-Luftman was supported by EPSRC (UK). 
%The authors thank the Associate Editor and referees for several helpful %suggestions.

%% Can use something like this to put references on a page
%
%% by themselves when using endfloat and the captionsoff option.
%
%\ifCLASSOPTIONcaptionsoff
%
%  \newpage
%
%\fi
%
%
%
%
%
%
%
%% trigger a \newpage just before the given reference
%
%% number - used to balance the columns on the last page
%
%% adjust value as needed - may need to be readjusted if
%
%% the document is modified later
%
%%\IEEEtriggeratref{8}
%
%% The "triggered" command can be changed if desired:
%
%%\IEEEtriggercmd{\enlargethispage{-5in}}
%
%
%
%% references section
%
%
%
%% can use a bibliography generated by BibTeX as a .bbl file
%
%% BibTeX documentation can be easily obtained at:
%
%% http://www.ctan.org/tex-archive/biblio/bibtex/contrib/doc/
%
%% The IEEEtran BibTeX style support page is at:
%
%% http://www.michaelshell.org/tex/ieeetran/bibtex/
%
%%\bibliographystyle{IEEEtran}
%
%% argument is your BibTeX string definitions and bibliography database(s)
%
%%\bibliography{IEEEabrv,../bib/paper}
%
%%
%
%% <OR> manually copy in the resultant .bbl file
%
%% set second argument of \begin to the number of references
%
%% (used to reserve space for the reference number labels box)
%
%%\begin{thebibliography}{1}

\bibliographystyle{IEEEtran}

\end{document}